\DeclarePairedDelimiterX\setc[2]{\{}{\}}{\,#1 \;\delimsize\vert\; #2\,}
\DeclarePairedDelimiter\abs{\lvert}{\rvert}%
\DeclarePairedDelimiter\norm{\lVert}{\rVert}%
\let\oldabs\abs
\def\abs{\@ifstar{\oldabs}{\oldabs*}}
\let\oldnorm\norm
\def\norm{\@ifstar{\oldnorm}{\oldnorm*}}
\newtheorem{theorem}{Theorem}[section]
\newtheorem{corollary}[theorem]{Corollary}
\theoremstyle{definition}
\newtheorem{definition}[theorem]{Definition}
\newtheorem{example}[theorem]{Example}
\newtheorem{remark}[theorem]{Remark}
\newtheoremstyle{algorithm} 
    {\topsep}                    
    {\topsep}                    
    {\ttfamily}                   
    {}                           
    {\scshape}                   
    {.}                          
    {.5em}                       
    {}  
\theoremstyle{algorithm}
\newtheoremstyle{theorem-w/o-number}
  {\topsep}   
  {\topsep}   
  {\itshape}  
  {0pt}       
  {\bfseries} 
  {}         
  {5pt plus 1pt minus 1pt} 
  {}          
\theoremstyle{theorem-w/o-number}
\newtheorem*{theorem*}{Theorem}
\newtheorem*{teorema*}{Teorema}
\newtheorem*{corollary*}{Corollary}
\newtheorem*{corolario*}{Corolario}
\newtheorem*{lemma*}{Lemma}
\newtheorem*{lema*}{Lema}
\newtheorem*{proposition*}{Proposition}
\newtheorem*{proposicion*}{Proposici\'on}
\newtheorem*{notation*}{Notation}
\newtheorem*{assumptions*}{Assumptions}
\newtheoremstyle{definition-w/o-number}
  {\topsep}   
  {\topsep}   
  {\normalfont}  
  {0pt}       
  {\bfseries} 
  {}         
  {5pt plus 1pt minus 1pt} 
  {}          
\theoremstyle{definition-w/o-number}
\newtheorem*{definition*}{Definition}
\newtheorem*{definicion*}{Definici\'on}
\newtheorem*{example*}{Example}
\newtheorem*{ejemplo*}{Ejemplo}\newtheorem*{remark*}{Remark}
\newtheorem*{observacion*}{Observaci\'on}
\numberwithin{equation}{section}
\let\tmp\oddsidemargin
\let\oddsidemargin\evensidemargin
\let\evensidemargin\tmp
\newcolumntype{L}[1]{>{\raggedright\let\newline\\\arraybackslash\hspace{0pt}}m{#1}}
\newcolumntype{C}[1]{>{\centering\let\newline\\\arraybackslash\hspace{0pt}}m{#1}}
\newcolumntype{R}[1]{>{\raggedleft\let\newline\\\arraybackslash\hspace{0pt}}m{#1}}
\newcommand{\Cech}{\v{C}}
\newcommand{\cech}{\check{\mathcal{C}}}
\newcommand{\bR}{\mathbb R}
\newcommand{\bS}{\mathbb S}
\newcommand{\bT}{\mathbb T}
\newcommand{\Image}{\text{\rm Im}\,}
\newcommand{\Ker}{\text{\rm Ker}\,}
 \newcommand{\dlie}{\partial }
\newcommand{\De}{\Delta}
\tikzset{commutative diagrams/diagrams={ampersand replacement=\&}}
\tikzset{dbl/.style={double,
		double equal sign distance,
		-implies,
		shorten >=10pt,
		shorten <=10pt}}
\tikzset{
	invisible/.style={opacity=0},
	visible on/.style={alt={#1{}{invisible}}},
	alt/.code args={<#1>#2#3}{%
		\alt<#1>{\pgfkeysalso{#2}}{\pgfkeysalso{#3}}%
	}
}
\newcommand{\X}{\mathbb{X}}
\newcommand{\CC}{\mathcal{C}}
\newcommand{\DD}{\mathcal{D}}
\renewcommand{\SS}{\mathcal{S}}
\newcommand{\Cfunc}{\mathbf{C}}
\newcommand{\Wfunc}{\mathbf{W}}
\newcommand{\Ffunc}{\mathbf{F}}
\newcommand{\Gfunc}{\mathbf{G}}
\newcommand{\Hfunc}{\mathbf{H}}
\newcommand{\Vfunc}{\mathbf{V}}
\newcommand{\Vect}{\mathbf{Vect}}
\newcommand{\Top}{\mathbf{Top}}
\def\l@subsection{\@tocline{2}{0pt}{2.5pc}{5pc}{}}
\providecommand{\keywords}[1]
{
  \small	
  \textbf{\textit{Keywords---}} #1
}
\begin{document}

\title{{$A_\infty$ persistent homology} estimates the topology from pointcloud datasets}

\author{Francisco Belch\'{\i}$^1$
    \footnote{\noindent $^1$ frbegu@gmail.com -
        \href{https://orcid.org/0000-0001-5863-3343}{https://orcid.org/0000-0001-5863-3343}\\
        Institut de Rob\`otica i Inform\`atica Industrial, CSIC-UPC \\
        Llorens i Artigas 4-6, 08028 Barcelona, Spain
    }, 
    Anastasios Stefanou$^2$
    \footnote{\noindent $^2$ stefanou.3@osu.edu\\
        Mathematical Biosciences Institute; Department of Mathematics \\
        The Ohio State University
        }
}

\maketitle

\tableofcontents

\begin{abstract}
Let $X$ be a closed subspace of a metric space $M$.
Under mild hypotheses, one can estimate the Betti numbers of $X$ from a finite set $P \subset M$ of points approximating $X$.
In this paper, we show that one can also use $P$ to estimate
much more detailed topological properties of $X$. These properties are computed via $A_\infty$-structures, and are therefore related to the cup and Massey products of $X$, its loop space $\Omega X$, its formality, linking numbers, etc.

Additionally, we study the following setting:
given a continuous function $f \colon Y \longrightarrow \mathbb R$ on a topological space $Y$, $A_\infty$ persistent homology 
builds a family of barcodes presenting a highly detailed description of some geometric and topological properties of $Y$. 
We prove here that under mild assumptions, these barcodes are stable: small perturbations in the function $f$ imply at most small perturbations in the barcodes. 
\end{abstract}

\keywords
{
    \noindent \textbf{\textit{Keywords---}}
	Persistent homology, persistent cohomology,
	bottleneck distance, interleaving distance,
	stability, functoriality, 
	applied algebraic topology, Topological Data Analysis (TDA),
	topological estimation, geometric estimation, 
	$A_\infty$-persistence,	$A_\infty$ persistent homology,
	$A_\infty$-coalgebra, $A_\infty$-algebra, 
	Betti numbers,
	cup product, Massey products, linking number,
	loop spaces, 
	formal spaces.
}

\section{Introduction}
\label{sec:intro}

Persistent homology (in the sense of
\cite{Carlsson-Zomorodian05,
Edelsbrunner-Letscher-Zomorodian02}) is a topological technique used to
extract global structural information
from datasets which may be high dimensional and contain noise.

About a decade ago, two results set the foundations of persistent homology as a robust mathematical theory.
First, the \emph{structural theorem} \cite[\S 3]{Carlsson-Zomorodian05} explained how the homology of a sequence of nested topological spaces can be split into simple pieces forming a barcode or a persistence diagram. Secondly, the \emph{stability theorem} \cite[Main Thm.]{Cohen_Steiner-Edelsbrunner-Harer07} showed that small perturbations in the input sequence can produce at most small perturbations in the corresponding barcode.

These two milestones justified the use of barcodes as a meaningful characteristic which is robust to noise. They also provided the formalism to show that in order to estimate the homology groups of a closed subspace $X$ of a metric space,
in theory it is enough to have a sufficiently good finite sample $P$ of $X$ \cite[Homology Inference Theorem]{Cohen_Steiner-Edelsbrunner-Harer07}.
To that end, one would only need to compute the barcode of the following sequence of nested spaces: for any given radius $r$, consider the union $P_r$ of the balls of radius $r$ centered at each point in $P$. Then, as $r$ grows, so does the union $P_r$.

Persistent homology has been successfully applied to fields such as medicine \cite{Belchi18_COPD, Adcock-Carlsson-Rubin14}, sensor networks coverage \cite{de_Silva-Ghrist07} and molecular modelling \cite{Gameiro-Hiraoka-Izumi-Kramar_Mischaikow-Nanda15, PirashviliEtAl_Chemistry_Soton18}, among many others. 
However, persistent homology computes information only at the level of homology groups. Intuitively, this means that persistent homology cares about the number of connected components, tunnels, voids and higher-dimensional holes of objects, and this information is not always enough.
For instance, work on signal processing \cite{Perea16_toroidal} and image texture representation \cite{Carlsson-Ishkhanov-de_Silva-Zomorodian08} shows that point clouds whose shapes are related to tori $\mathbb{T}$ and Klein bottles $\mathbb{K}$ arise naturally from data. With coefficients in the finite field of two elements $\mathbb{Z}_2 = \{0, 1\}$, homology groups do not distinguish $\mathbb{T}$ from $\mathbb{K}$, nor from a space as simple as a wedge of spheres $\mathbb{S}^1 \vee \mathbb{S}^2 \vee \mathbb{S}^1$, but the fundamental group does, and so does the cohomology ring.
It was then natural to enhance persistent homology with the discriminatory power of the fundamental group or the cohomology ring.
A persistence approach to the fundamental group can be found in 
\cite{Chazal-Lieutier05, Brendel-Dlotko-EllisEtAl15},
and the cup product is dealt with within the theory of $A_\infty$ persistent homology, or $A_\infty$-persistence, for short \cite{Belchi-Murillo15, Belchi17}. Beyond that, in order to use cohomology to detect that the Borromean rings are non-trivially linked, the cup product is not enough, and ternary operations like Massey products are needed. Information at this ternary and $n$-ary level in general is included as well in the computations of $A_\infty$ persistent homology.

Recent advances in generalizing the structure theorem \cite{chazal2016structure,Crawley-Boevey15,Lesnick15_ThyOfInterleaving} and in categorifying the stability theorem \cite{Bubenik_GenPersMod_15,de2018theory} allow one to 
prove that a given version of persistence (such as $A_\infty$ persistent homology) has a barcode decomposition and is stable, provided it is functorial.
The issue is that $A_\infty$ persistent homology is not functorial in general \cite[Thm. 3.1]{Belchi-Murillo15}. 
Therefore, a big challenge consists of finding a non-trivial context in which we can guarantee the functoriality of $A_\infty$-persistence. The main contribution of this work is the identification of one such context. Specifically, we introduce the category $\Top_n$ (Def. \ref{Def:Top_m}) and show that $A_\infty$ persistent homology is functorial within this category (Thm. \ref{thm:Functoriality}).
Additionally, we illustrate that this is the
largest category of its form (in a sense made explicit in Rmk. \ref{rmk:Top_n_is_the_largest_4_functoriality}) for which such functoriality should be expected.

A crucial part of this paper is therefore devoted to proving the functoriality of $A_\infty$ persistent homology (Thm.~\ref{thm:Functoriality}). From this, we then deduce that the barcodes from $A_\infty$ persistent homology are robust to small perturbations of the input (Cor.~\ref{cor:Stability_A_infty} and Cor.~\ref{cor:stability_Hausdorff}), and that one can extrapolate $A_\infty$ information of a metric space from a finite point-set approximation (Cor.~\ref{cor:ainfty-inference}).

This paper is organized as follows: 
In \S \ref{sec:persistenceAndFunctoriality}, we recall the basics of persistent homology and state the formal results we will use in \S \ref{sec:stability_of_Ainfty} to study the stability of $A_\infty$ persistent homology.
In \S \ref{sec:A_inftyCoalgebras}, we collect all definitions and properties we need to know about $A_\infty$-structures in order to understand the theory of $A_\infty$ persistent homology. All results in \S \ref{sec:stability_of_Ainfty} are stated in terms of the category $\Top_n$ we define in Def.~\ref{Def:Top_m}. 
The main theorem of the paper proves the functoriality of $A_\infty$ persistent homology (Thm.~\ref{thm:Functoriality}).
Rmk.~\ref{rmk:Top_n_is_the_largest_4_functoriality} and Ex. \ref{ex:Top_n_is_the_largest_4_functoriality} illustrates that $\Top_n$ is large enough in a particular sense.
As a first corollary of Thm.~\ref{thm:Functoriality}, we provide a new structure theorem
for $A_\infty$ persistent homology for a case left aside to date (Cor.~\ref{cor:Barcode_Ainfty}). 
To illustrate the higher discriminatory power of $A_\infty$ persistent homology over classical persistence, Ex. \ref{ex:barcodes_in_PH_and_Ainfty} exhibits two persistent spaces $X_*, Y_*$ with the same persistent homology barcodes but different $A_\infty$ barcodes. This toy example also shows how cup product persistence is part of $A_\infty$-persistence.
We finish \S \ref{sec:stability_of_Ainfty} with three important applications of the functoriality Thm.~\ref{thm:Functoriality} - namely, we show that $A_\infty$ persistent homology is stable with respect to perturbations in the input function (Cor.~\ref{cor:Stability_A_infty}) and perturbations in the input space (Cor.~\ref{cor:stability_Hausdorff}), and that we can recover $A_\infty$ information of a metric space from a finite point sample (Cor.~\ref{cor:ainfty-inference}).

Note that when we focus on the second operation $\Delta_2$ on an $A_\infty$-coalgebra (such as in Ex. \ref{ex:barcodes_in_PH_and_Ainfty}), or equivalently, on the cup product on an $A_\infty$-algebra on cohomology, then all results in this paper hold without the need to restrict to the category $\Top_n \subseteq \Top$ and instead, we can work directly with the category of topological spaces $\Top$. In particular, this paper proves the stability of the persistence of cup product with minimal restrictions.

\begin{notation*}
Throughout the text, we will work over a fixed field $\mathbb F$. We will usually omit the field from the notation. \emph{E.g., } we will denote by $H_*(X)$ the singular homology of $X$ with coefficients in $\mathbb F$.

We will present the results of this paper in terms of homology, but everything works as well for cohomology and for reduced (co)homology.
\end{notation*}


\section{Persistence and functoriality}
\label{sec:persistenceAndFunctoriality}

Let 
\begin{equation}
\label{eq:filtration_discrete}
\xymatrix{
K_0 \ar@{^{(}->}[r] & K_1 \ar@{^{(}->}[r] & \cdots \ar@{^{(}->}[r] & K_N
}
\end{equation}
be a finite sequence of nested topological spaces.
In the context of persistence, sequences like this arise as sublevel sets of functions of the form $f \colon M \longrightarrow \mathbb{R}$, for some metric space $M$; for instance, by specifying $$K_i \coloneqq f^{-1}( -\infty, i ].$$

To give a more concrete example, given a closed subspace $X$ of $M$, if one defines the distance function
$$ d^X \colon M \to \mathbb{R}, \quad y \mapsto d(y, X),$$
then the sequence given by $K_i \coloneqq (d^X)^{-1}( -\infty, i ]$
can be interpreted as a thickening of $X$.

Let us fix a particular homology degree of interest, $p \geq 0$ and assume that all these nested spaces have finite-dimensional homology groups, \emph{i.e.,} $\dim_\mathbb{F} H_p(K_i) < \infty$, for all $0 \leq i \leq N$.
We learned from \cite{Carlsson-Zomorodian05} that we can decompose the $p^\text{th}$ homology of the sequence (\ref{eq:filtration_discrete}) in simple pieces that can be represented in what is called a barcode or a persistence diagram. We now recall the formalism behind this in a higher level of generality which we will need later on.

\begin{notation*}
$\bR$ will denote the poset $(\bR,\leq)$ of real numbers.  
$\Vect$ will denote the category of $\mathbb F$-vector spaces and linear maps, and $\Top$ will denote the category of topological spaces and continuous maps.

We will use the notation $\mathcal P$ for any poset $(\mathcal P, \leq)$, i.e. any category whose objects are the elements of $\mathcal P$, and such that given two objects $x, y \in \mathcal P$, there is exactly one arrow $x \rightarrow y$ if $x \leq y$ and no arrow $x \rightarrow y$, otherwise.
\end{notation*}

\begin{definition}
Let $\CC$ be any category.
A \textbf{generalized persistence module} (valued in $\CC$)
is a functor of the form $\Ffunc:\bR\to\CC$.
When the category $\CC$ is understood by the context, we call $\Ffunc$ a generalized persistence module or \textbf{GPM} for short.
A morphisms between GPMs is a natural transformation between these functors.
In this way, the collection of all GPMs forms a functor category $\CC^{\bR}$ which we call a GPM-category.
The GPM-categories we focus on are $\Top^{\bR}$ and $\Vect^\bR$, whose objects are called \textbf{persistence spaces} and \textbf{persistence modules}, respectively.

A persistence module $\Vfunc \in \Vect^\bR$ is \textbf{pointwise finite dimensional (p.f.d) } if $\dim_\mathbb{F}\Vfunc(t) < \infty$ for every $t\in\bR$.
\end{definition}

\begin{definition}
\label{def:d_infty_for_Rvalued_continuous_maps}
For a pair of continuous maps
$f:X \longrightarrow \bR, g:{Y} \longrightarrow \bR$,
let us define the distance
\begin{equation}
\label{eq:l-infinity_distance}
d_{\infty}(f,g)=\inf_{\Phi}||f-g\circ\Phi||_{\infty}
\end{equation}
where $\Phi$ ranges over all homeomorphisms of the form $\Phi:X\to{Y}$. 
We set 
$d_{\infty}(f, g)=\infty$
if $X$ and ${Y}$ are not homeomorphic.
\end{definition}

The collection of all real-valued continuous functions forms a slice category $(\Top\downarrow\bR)$ which is equipped with the distance given in Def.~\ref{def:d_infty_for_Rvalued_continuous_maps}.
These functions are commonly used to construct persistence spaces via the sublevel-set filtration construction $\SS$.

\begin{definition}
The \textbf{sublevel-set filtration functor}
$\SS \colon (\Top\downarrow\bR) \longrightarrow \Top^{\bR}$
assigns to each continuous $f:X\to\bR$ the persistence space $\SS(f):\bR\to\Top$, $t\mapsto f^{-1}(-\infty,t]$.
\end{definition}

Fix an integer $p \geq 0$. 
Consider the singular homology functor $ H_p \colon \Top\to\Vect$, $X\mapsto H_{p}(X)$ that assigns to each space its $p^\text{th}$ homology group with coefficients in the field $\mathbb{F}$.

\begin{definition}
The post composition functor $ H_p\circ-:\Top^{\bR}\to\Vect^{\bR}$ assigns to each persistence space ${X}_{*} \colon \mathbb{R} \longrightarrow \Top,$ $t\mapsto {X}_{t}$, the persistence module $ H_p{X}_{*}\colon \mathbb{R} \longrightarrow \Vect$,  $t\mapsto H_p({X}_{t})$. $H_p{X}_{*}$ is called 
the \textbf{persistent $p^\text{th}$ homology of ${X}_{*}$.}
Analogously, the post composition functor $ H_p\SS\circ-:(\Top\downarrow\bR)\to\Vect^{\bR}$ assigns to each continuous function $f:X\to\bR$, the persistence module $ H_p\SS(f)\colon \mathbb{R} \longrightarrow \Vect$,  $t\mapsto H_p(f^{-1}(-\infty,t])$. $ H_p\SS(f)$ is called the \textbf{sublevel-set persistent $p^\text{th}$ homology of $f:X\to\bR$.}
\end{definition}

There is a natural way of splitting a persistence module into elementary pieces called interval persistence modules:

\begin{definition}
Given an interval $I\subseteq \mathbb R \cup \{+\infty\}$, the \textbf{interval persistence module} $C(I) \in \Vect^{\mathbb R}$ is given by
\[ 
\Cfunc(I)(t)=
\begin{cases} 
\mathbb{F} & \text{if }t\in I \\
0 & \text{otherwise.} 
\end{cases}
\]
\[ 
\Cfunc(I)[s\leq t]=
\begin{cases} 
Id_{\mathbb{F}} & \text{if }s,t\in I \\
0 & \text{otherwise.} 
\end{cases}
\]
\end{definition}

\begin{theorem} 
\textbf{(Structure theorem of persistence modules \cite{Crawley-Boevey15})}
\label{thm:Barcode_decomposition}
Every p.f.d. persistence module $\Vfunc \in \Vect^{\mathbb R}$ decomposes uniquely (up to isomorphism)  
into interval persistence modules $C(I)$,
$$\Vfunc\cong\bigoplus_{I\in B(\Vfunc)}\Cfunc(I),$$
where $B(\Vfunc)$ is a multiset (\emph{i.e.,} a set of objects with multiplicities) of intervals of the form $[a,b)$ or $(-\infty, b)$ for some $a\in \mathbb R$, $b\in  \mathbb{R}\cup\{+\infty\}$. This $B(\Vfunc)$ is called the \textbf{barcode} of $\Vfunc$.
\end{theorem}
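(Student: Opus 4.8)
The plan is to split the statement into an existence claim and a uniqueness claim and to dispatch uniqueness first, as it is the softer half. Each interval module $\Cfunc(I)$ is indecomposable and has endomorphism ring $\operatorname{End}(\Cfunc(I)) \cong \mathbb{F}$ (an endomorphism is a single scalar on the region where $\Cfunc(I)$ is nonzero), which is local. Granting an existence decomposition $\Vfunc \cong \bigoplus_{I \in B(\Vfunc)} \Cfunc(I)$, the uniqueness of the multiset $B(\Vfunc)$ follows from the Krull--Remak--Schmidt--Azumaya theorem, which applies to arbitrary --- possibly uncountable --- direct sums of modules with local endomorphism rings and guarantees that any two such decompositions agree up to a bijection preserving isomorphism type. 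So I would reduce uniqueness to the endomorphism-ring computation together with a citation of Azumaya's theorem.

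The substance is existence, which I would obtain by a functorial-filtration construction that produces the interval summands directly, so that one never has to argue separately for a decomposition into indecomposables. In the finite case this is transparent: over a finite chain $t_0 < \cdots < t_N$, $\Vfunc$ is a representation of the linearly oriented type $A_{N+1}$ quiver, and Gabriel's theorem identifies every indecomposable as an interval module, with pointwise finiteness making each such restriction finite-dimensional. For the whole line I would, for $s \le t \le u$, record the images $\Image(\Vfunc[s \le t]) \subseteq \Vfunc(t)$ and the kernels $\Ker(\Vfunc[t \le u]) \subseteq \Vfunc(t)$; intersecting and unioning these over $s$ and $u$ yields a canonical finite filtration of each $\Vfunc(t)$ whose graded pieces correspond to the intervals passing through $t$. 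Choosing bases compatibly across $t$, which is possible because the filtration is natural in $t$, then assembles the decomposition into interval modules.

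I expect the main obstacle to be making this global, compatible choice of bases rigorous. Over a finite chain one may induct on total dimension, but over $\mathbb{R}$ the module can be an infinite --- even uncountable --- direct sum, so no dimension induction is available and the compatible basis must be produced in a single stroke while controlling accumulation points of the birth and death values. This is exactly where pointwise finiteness is indispensable: it forces the images and kernels above to stabilize and the canonical filtration of each $\Vfunc(t)$ to be finite, which is what permits the local quiver-representation pictures to be glued, and this gluing is the technical heart of the Crawley--Boevey argument. Finally, the specific endpoint shapes $[a,b)$ and $(-\infty,b)$ recorded in $B(\Vfunc)$ are not forced for an arbitrary p.f.d. module, which can in general exhibit all four interval types; I would obtain the left-closed, right-open convention from the regularity enjoyed by persistence modules arising from the sublevel-set filtration $\SS$.
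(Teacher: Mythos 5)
The paper itself contains no proof of Theorem~\ref{thm:Barcode_decomposition}: it is quoted as a black box from Crawley-Boevey, so the only meaningful comparison is with the cited proof. Your outline is essentially that proof's route. Uniqueness via Krull--Remak--Schmidt--Azumaya is exactly right: an endomorphism of $\Cfunc(I)$ is a single scalar (naturality over $s\leq t$ in $I$ forces all the pointwise scalars to agree, since the internal maps are $Id_\mathbb{F}$), so $\operatorname{End}(\Cfunc(I))\cong\mathbb{F}$ is local and Azumaya's theorem applies to arbitrary, possibly uncountable, direct sums. Existence by the functorial-filtration method (canonical subspaces of each $\Vfunc(t)$ built from $\Image \Vfunc[s\leq t]$ and $\Ker \Vfunc[t\leq u]$, stabilizing by pointwise finite-dimensionality, then glued) is also the strategy of the cited argument. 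The one place where your sketch is genuinely thinner than the source is the phrase ``choosing bases compatibly across $t$, which is possible because the filtration is natural in $t$'': naturality alone does not produce a compatible system of splittings over uncountably many indices, and this step is precisely the technical core that Crawley-Boevey resolves with a dedicated argument about sections of the canonical filtration. You correctly identify it as the heart of the matter, but as written it is asserted rather than proved, so your proposal is a faithful reconstruction of the cited proof's architecture with its hardest lemma left as a black box.

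Your closing observation deserves to be promoted from a side remark to the main point, because it concerns the statement rather than the proof. You are right that a general p.f.d.\ module realizes all four interval types, and this makes the theorem \emph{as stated here} literally false: $\Vfunc=\Cfunc((0,1))$ is p.f.d.\ and indecomposable, hence admits no decomposition into modules of the form $\Cfunc([a,b))$ or $\Cfunc((-\infty,b))$. What Crawley-Boevey proves is decomposition into interval modules over \emph{arbitrary} intervals; the restriction to left-closed, right-open types is a property of the particular modules this paper later feeds into the theorem (sublevel-set persistent homology $H_p\SS(f)$ and its $\kappa_{n,p}$ analogues), under suitable regularity, and not of general $\Vfunc\in\Vect^{\mathbb{R}}$. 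Consequently your proposed repair --- deriving the endpoint convention from the regularity of $\SS$ --- fixes the applications but cannot fix the statement: any correct proof must first drop the interval-type restriction from the universally quantified claim, and then reinstate it, separately, for the modules actually arising in the paper.
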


In particular, the persistence module $\Vfunc \coloneqq H_p{X}_{*} \in \Vect^{\mathbb R}$ is uniquely determined by a barcode, as long as 
$\dim_\mathbb{F} H_p({X}_{t}) < \infty$, for all $t\in\mathbb{R}$.
In comparison, the original structure theorem \cite[\S 3]{Carlsson-Zomorodian05} required this persistence module to be indexed by the integers, $ H_p{X}_{*} \in \Vect^{\mathbb Z}$.
Additionally, \cite{Carlsson-Zomorodian05} also required there to be finitely many points $r \in \mathbb{R}$ such that, for every sufficiently small $\epsilon >0$, the linear map $H_p{X}_{r-\epsilon} \longrightarrow H_p{X}_{r + \epsilon}$ is not an isomorphism.

Each barcode has an associated persistence diagram, which is a multiset of points in the extended plane 
$ \left( \mathbb R \cup \{-\infty\} \right)
\times \left( \mathbb R \cup \{+\infty\} \right)$.

\begin{definition}
Given a barcode $B(\Vfunc)$, its corresponding \textbf{persistence diagram} $Dgm(\Vfunc)$ consists of the following points:
\begin{itemize}
\item For each interval $[a,b) \in B(\Vfunc)$ with multiplicity $k$, the point of coordinates $(a, b) \in \mathbb R \times \left( \mathbb R \cup \{+\infty\} \right)$ is included in $Dgm(\Vfunc)$ with multiplicity $k$.
\item For each interval $(-\infty,b) \in B(\Vfunc)$ with multiplicity $k$, the point of coordinates $(-\infty, b) \in \{-\infty\}
\times \left( \mathbb R \cup \{+\infty\} \right)$ is included in $Dgm(\Vfunc)$ with multiplicity $k$.
\item For each $x \in \mathbb R$, the diagonal point $(x, x)\in \mathbb R^2$ is included in $Dgm(\Vfunc)$ with infinite multiplicity.
\end{itemize}
\end{definition}

Many stability theorems are presented in terms of a pseudo-distance between barcodes or persistence diagrams called the bottleneck distance.
In order to give its definition,
we use the following notation. For any $(p_1, p_2), (q_1, q_2) \in \left( \mathbb R \cup \{-\infty\} \right) \times \left( \mathbb R \cup \{+\infty\} \right)$,
$$ || (p_1, p_2) - (q_1, q_2) ||_\infty \coloneqq \max \{ |p_1 - q_1|, |p_2 - q_2|\},$$ 
where $|\cdot|$ denotes the absolute value with the convention that:
\begin{itemize}
\item If $p_2=q_2=+\infty$, then $|p_2 - q_2| \coloneqq 0$.
\item If $p_1=q_1=-\infty$, then $|p_1 - q_1| \coloneqq 0$.
\item If one, and only one, of $p_2, q_2$ is $+\infty$, then $|p_2 - q_2| \coloneqq +\infty$.
\item If one, and only one, of $p_1, q_1$ is $-\infty$, then $|p_1 - q_1| \coloneqq +\infty$.
\end{itemize}

We think of each point of multiplicity $k$ in a persistence diagram as $k$ different points, and since all diagonal points are added with infinite multiplicity, there are always infinitely many bijections between any given pair of persistence diagrams.

\begin{definition}
\label{def:bottleneck_distance}
The \textbf{bottleneck distance} $d_B$ between two barcodes $B(\Vfunc)$ and $B(\Wfunc)$ (or between their associated persistence diagrams $Dgm(\Vfunc)$ and $Dgm(\Wfunc)$) is defined as
$$ d_B( B(\Vfunc), B(\Wfunc)) = d_B (Dgm(\Vfunc), Dgm(\Wfunc)) \coloneqq \inf_\gamma \sup_x || x - \gamma(x) ||_\infty, $$
where $x$ runs over all points in $Dgm(\Vfunc)$ and $\gamma$ runs over all bijections $\gamma \colon Dgm(\Vfunc) \longrightarrow Dgm(\Wfunc)$.
\end{definition}
See \cite[\S 3.1]{Lesnick15_ThyOfInterleaving}
for a definition of the bottleneck distance in terms of matchings and see \cite{KerberEtAl16} to learn how to efficiently compute the bottleneck distance.

\begin{example}
\label{ex:Bottleneck_distance}
Let $\Vfunc$ and $\Wfunc$ be the persistence modules given by the following barcodes:
$B(\Vfunc) = \{ [1, 6), [9, 10) \}, B(\Wfunc) = \{[2, 6.8) \}.$
The bottleneck distance $d_B(B(\Vfunc), B(\Wfunc))$ can be computed as $\max \{x, y, z\}$, where $x, y, z$ are those distances shown in Fig.~\ref{fig:Bottleneck_distance_example}.
Hence, $d_B(B(\Vfunc), B(\Wfunc)) = \max \{1, 0.8, 0.5\} = 1.$
\end{example}

\begin{figure}[h!]
    \centering
	\includegraphics[width=0.9\textwidth]{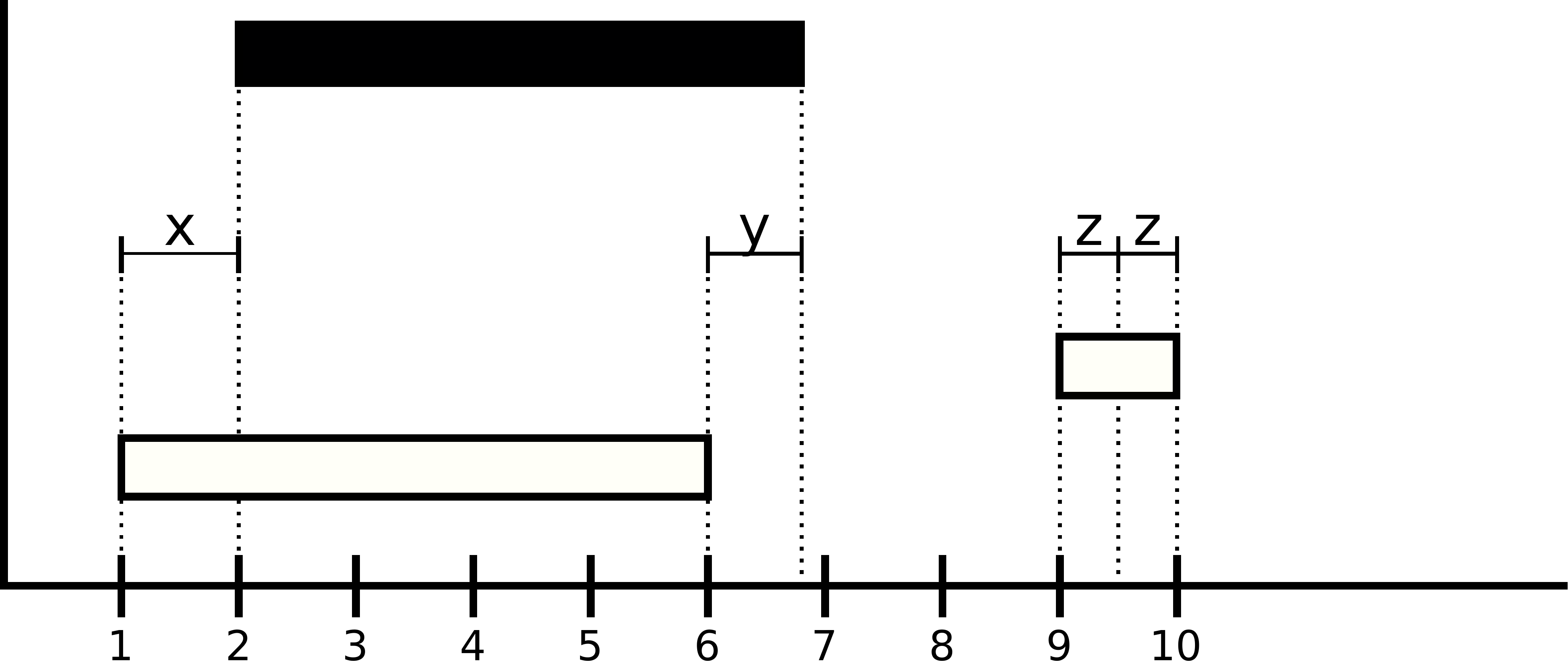}
	\caption{Illustration of the bottleneck distance.
	The two intervals in the barcode
	$B(\Vfunc) = \{ [1, 6), [9, 10) \}$ are shown in white and the interval of the barcode $B(\Wfunc) = \{[2, 6.8) \}$ is shown in black. Therefore, 
    $d_B(B(\Vfunc), B(\Wfunc)) = \max \{x, y, z\} = \max \{1, 0.8, 0.5\} = 1.$
    Indeed, the bottleneck distance $d_B(B(\Vfunc), B(\Wfunc))$ can be interpreted as the minimum amount one has to enlarge or shrink the ends of the intervals in $B(\Vfunc)$ in order to obtain the intervals in $B(\Wfunc)$.
	Just as we assumed every persistence diagram to contain each diagonal point with infinite multiplicity, here we can think of each barcode as having each interval of the form $[a, a) = \emptyset$, for $a\in\mathbb R$, with infinite multiplicity.
	This allows the trick of shrinking both ends of an interval by half of its length to make it disappear and the trick of enlarging both ends of an empty interval $[a,a)$ to form an interval $[a-\epsilon, a+\epsilon)$.}
	\label{fig:Bottleneck_distance_example}
\end{figure}

In order to state the classical stability theorem for barcodes, we need one last concept.
    
\begin{definition}
\label{def:homological_critical_value}
Given a continuous function $f \colon {X} \longrightarrow \bR$, its \textbf{homological critical values} of degree $p$ are those points $t \in \mathbb{R}$ such that, for every sufficiently small $\epsilon >0$, the linear map $H_p{X}_{t-\epsilon} \longrightarrow H_p{X}_{t + \epsilon}$ is not an isomorphism.
\end{definition}
    
\begin{theorem}
\label{thm:stability_classical}
\textbf{(Stability of persistent homology 
\cite[Main Thm.]{Cohen_Steiner-Edelsbrunner-Harer07})}
Let $p \geq 0$ be an integer, let ${X}$ be a triangulable space and let
$f, g \colon {X} \longrightarrow \bR$ be continuous functions with finitely many homological critical values of degree $p$. 
If the persistence modules $ H_p\SS(f), H_p\SS(g) \colon \mathbb{R} \longrightarrow \Vect$ are p.f.d., then the bottleneck distance between the barcodes of $f$ and $g$ is bounded above by the supremum distance between the functions:
$$d_B(B(H_p\SS(f)), B(H_p\SS(g))) \leq ||f - g||_\infty.$$
\end{theorem}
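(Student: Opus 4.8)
The plan is to derive the bound from an interleaving of persistence modules, reducing the theorem to the algebraic stability (isometry) theorem. Set $\delta \coloneqq \|f-g\|_\infty$; if $\delta = \infty$ there is nothing to prove, so assume $\delta < \infty$. The strategy is threefold: (i) turn the hypothesis $\|f-g\|_\infty \leq \delta$ into nested inclusions of sublevel sets; (ii) apply $H_p$ to obtain a $\delta$-interleaving of $H_p\SS(f)$ and $H_p\SS(g)$; and (iii) invoke the inequality between the bottleneck distance $d_B$ and the interleaving distance $d_I$ for p.f.d. modules.

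First I would record the basic sublevel-set inclusions. For every $x \in X$ we have $|f(x) - g(x)| \leq \delta$, so $f(x) \leq t$ forces $g(x) \leq t + \delta$; hence
$$\SS(f)(t) = f^{-1}(-\infty, t] \subseteq g^{-1}(-\infty, t+\delta] = \SS(g)(t+\delta),$$
and by symmetry $\SS(g)(t) \subseteq \SS(f)(t+\delta)$, for every $t \in \mathbb{R}$. Applying the homology functor $H_p$ yields linear maps
$$\phi_t \colon H_p\SS(f)(t) \to H_p\SS(g)(t+\delta), \qquad \psi_t \colon H_p\SS(g)(t) \to H_p\SS(f)(t+\delta).$$
Since every map involved is induced by an inclusion of subspaces of $X$, these maps commute with the internal structure maps of the two persistence modules, so $(\phi_t)_t$ and $(\psi_t)_t$ are natural transformations shifted by $\delta$.

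Next I would check the two interleaving triangles. The composite inclusion $f^{-1}(-\infty, t] \hookrightarrow g^{-1}(-\infty, t+\delta] \hookrightarrow f^{-1}(-\infty, t+2\delta]$ is exactly the inclusion inducing the internal map of $H_p\SS(f)$ from $t$ to $t+2\delta$, so functoriality of $H_p$ gives $\psi_{t+\delta}\circ\phi_t = H_p\SS(f)[t \leq t+2\delta]$, and symmetrically $\phi_{t+\delta}\circ\psi_t = H_p\SS(g)[t \leq t+2\delta]$. This is precisely a $\delta$-interleaving of the two p.f.d. modules, so their interleaving distance is at most $\delta$. Here the hypotheses of triangulability and finitely many homological critical values of degree $p$ do the work of guaranteeing tameness: they ensure each sublevel set has finite-dimensional $H_p$ and that the modules change at only finitely many parameters, so that Thm.~\ref{thm:Barcode_decomposition} applies and the barcodes are well-defined with finitely many distinct endpoints.

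Finally, I would invoke the algebraic stability theorem (the bound $d_B \leq d_I$ for p.f.d. persistence modules) to conclude $d_B(B(H_p\SS(f)), B(H_p\SS(g))) \leq \delta = \|f-g\|_\infty$. I expect this last inequality to be the main obstacle, since it carries the genuine content of the statement: one must manufacture, from the $\delta$-interleaving, an explicit partial matching of the two barcodes whose unmatched bars are short and whose matched endpoints move by at most $\delta$. A clean modern route is the induced matching of Bauer--Lesnick, which reads a matching directly off the images of the interleaving maps and bounds its defect by $\delta$; the p.f.d. hypothesis is exactly what makes this construction work. By contrast, steps (i) and (ii) are routine once the sublevel-set inclusions are written down, so essentially all the difficulty is concentrated in passing from the interleaving to the bottleneck bound.
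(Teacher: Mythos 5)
Your proposal is correct, but it is worth being clear that the paper does not prove this theorem at all: it is quoted verbatim from \cite[Main Thm.]{Cohen_Steiner-Edelsbrunner-Harer07}, whose original argument is quite different from yours --- it interpolates between $f$ and $g$ by the family $f_\lambda = (1-\lambda)f + \lambda g$, controls how diagram points move via a ``Box Lemma,'' and produces the matching geometrically. Your route (sublevel-set inclusions $\Rightarrow$ $\delta$-interleaving $\Rightarrow$ algebraic stability $d_B \leq d_I$) is the modern categorical proof, and it is precisely the machinery this paper assembles in \S\ref{sec:persistenceAndFunctoriality} (Thm.~\ref{thm:Barcode_decomposition}, Thm.~\ref{thm:Functors_are_1Lipschitz_Real_case}, Thm.~\ref{thm:SublevelSet_1Lipschitz}, Thm.~\ref{thm:Isometry_theorem}) and then deploys, word for word, in its proof of the $A_\infty$ generalization (Cor.~\ref{cor:Stability_A_infty}); the inequality $d_B \leq d_I$ you flag as the real content is the half of Thm.~\ref{thm:Isometry_theorem} the paper attributes to \cite{ChazalEtAl09_proximity}, and the induced-matching argument of Bauer and Lesnick is indeed a valid proof of it. One small correction: under this modern route the hypotheses of triangulability and finitely many homological critical values are not what ``guarantees tameness'' --- the p.f.d.\ assumption, which the statement makes explicitly, is by itself what makes Thm.~\ref{thm:Barcode_decomposition} and the isometry theorem applicable. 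Those extra hypotheses are vestiges of the original 2007 proof and are simply superfluous in your argument; what your approach buys is exactly this weakening, which is why the paper's later results (Cor.~\ref{cor:Stability_A_infty}, Cor.~\ref{cor:stability_Hausdorff}) can drop triangulability and the finiteness of critical values altogether.
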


Thm.~\ref{thm:stability_classical} guarantees that slight changes in the input function can only result in slight changes in the corresponding barcodes.
Although this is considered the most classical stability result, it must be acknowledged that earlier work by the Italian team of M. d'Amico et al. obtained similar results for degree 0 homology \cite{d'Amico-Frosini-Landi03}.

We now recall the rest of the tools we need on the categorification of stability, which will help us prove that $A_\infty$ persistent homology is stable (see \S \ref{sec:stability_of_Ainfty}).
For that purpose, we will need to make some extra assumptions, but at the same time, we will also drop two of the assumptions in Thm.~\ref{thm:stability_classical} -- namely, the triangulability of ${X}$, and the finiteness condition on the number of homological critical points.

We first loot at interleavings between functors, which provide us with a tool to compare persistence modules and to compare persistence spaces as well.

\begin{definition}
	\cite{ChazalEtAl09_proximity}
	Two GPMs $\Ffunc,\Gfunc:\bR\to\CC$ are \textbf{$\epsilon$-interleaved},
	for $\epsilon\geq0$,
	if there exists a pair of natural transformations 
	$\varphi_t:\Ffunc(t)\to\Gfunc(t+\epsilon)$, $t\in\bR$ 
	and 
	$\psi_t:\Gfunc(t)\to\Ffunc(t+\epsilon)$, $t\in\bR$  
	such that the following diagrams commute
	\begin{equation*}
	\begin{tikzcd}
	\Ffunc(t)
	\arrow[dd, "{\Ffunc[t\leq t+2\epsilon]}"']
	\arrow[dr,  "{\varphi_t}"']
	\&
	\&
	\&
	\Gfunc(t)
	\arrow[dl, "{\psi_t}"']
	\arrow[dd, swap, "{\Gfunc[t\leq t+2\epsilon]}"']
	\\
	\&
	\Gfunc(t+\epsilon)
	\arrow[dl, crossing over,  "{\psi_{t+\epsilon}}"]
	\&
	\Ffunc(t+\epsilon)
	\arrow[dr, crossing over, "{\varphi_{t+\epsilon}}"]
	\\
	\Ffunc(t+2\epsilon)\&\&\&
	\Gfunc(t+2\epsilon).
	\end{tikzcd}
	\end{equation*}
	The \textbf{interleaving distance} between $\Ffunc$ and $\Gfunc$ is then defined as 
	\begin{equation*}
	d_I(\Ffunc,\Gfunc) \coloneqq \inf\{\epsilon\geq0 \mid \Ffunc,\Gfunc\text{ are }\epsilon\text{-interleaved}\}.
	\end{equation*}
	If $\Ffunc$ and $\Gfunc$ are not $\epsilon$-interleaved for any $\epsilon$, we set $d_I(\Ffunc,\Gfunc) = \infty$.
\end{definition}

In a way, the interleaving distance $d_I(\Ffunc,\Gfunc)$ measures how far the GPMs $\Ffunc$ and $\Gfunc$ are from being isomorphic. For instance, if two GPMs $\Ffunc$ and $\Gfunc$ are 0-interleaved, it means that they are isomorphic.

\begin{example}
\label{ex:Interleaving_distance}
Let $\Vfunc$ and $\Wfunc$ be the persistence modules in Ex.~\ref{ex:Bottleneck_distance} given by the following barcodes:
$B(\Vfunc) = \{ [1, 6), [9, 10) \}, B(\Wfunc) = \{[2, 6.8) \}.$
The interleaving distance $d_I(\Vfunc, \Wfunc)$ cannot be smaller than 1, since
for every $\epsilon < 1$, the diagram on the left is not commutative (the values of this diagram are made explicit in the diagram next to it):
\begin{equation*}
	\begin{tikzcd}
	\Vfunc(1)
	\arrow[dd, "{\Vfunc[1\leq 1+2\epsilon]}"']
	\arrow[dr,  "{\varphi_1}"']
    \& \& \&
    \mathbb F
	\arrow[dd, "{1_{\mathbb F}}"']
	\arrow[dr,  "{0}"']
	\\
	\&
	\Wfunc(1+\epsilon)
	\arrow[dl, crossing over,  "{\psi_{1+\epsilon}}"]
    \& \& \&
	0
	\arrow[dl, crossing over,  "{0}"]
	\\
	\Vfunc(1+2\epsilon)
    \& \& \&
    \mathbb F
	\end{tikzcd}
\end{equation*}
Notice, though, that for $\epsilon = 1$, the following diagram is commutative:
\begin{equation*}
	\begin{tikzcd}
	\Vfunc(1)
	\arrow[dd, "{\Vfunc[1\leq 3]}"']
	\arrow[dr,  "{\varphi_1}"']
    \& \& \&
    \mathbb F
	\arrow[dd, "{1_{\mathbb F}}"']
	\arrow[dr,  "{1_{\mathbb F}}"']
	\\
	\&
	\Wfunc(2)
	\arrow[dl, crossing over,  "{\psi_{2}}"]
    \& \& \&
	\mathbb F
	\arrow[dl, crossing over,  "{1_{\mathbb F}}"]
	\\
	\Vfunc(3)
    \& \& \&
    \mathbb F
	\end{tikzcd}
\end{equation*}
It is straightforward to check that, indeed, $d_I(\Vfunc, \Wfunc) = 1.$	
\end{example}

\begin{theorem}\cite[Thm.~3.21]{Bubenik_GenPersMod_15}
The interleaving distance $d_I$ on a GPM-category $\CC^{\bR}$ is an extended pseudometric on the class of generalized persistence modules. 
\end{theorem}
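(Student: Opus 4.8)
The plan is to verify directly that $d_I$ satisfies the three defining properties of an extended pseudometric: that $d_I(\Ffunc,\Ffunc)=0$, that $d_I$ is symmetric, and that it obeys the triangle inequality. Since $d_I$ is defined as an infimum over $\epsilon\geq 0$ (with value $+\infty$ when no interleaving exists), it automatically takes values in $[0,+\infty]$, so the ``extended'' part needs no separate argument.

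For the vanishing on the diagonal, I would exhibit a $0$-interleaving of any $\Ffunc$ with itself. Taking $\varphi_t=\psi_t=\id_{\Ffunc(t)}$ for every $t$ gives two (identity) natural transformations, and since $2\epsilon=0$ the commuting triangles collapse to the identities $\psi_t\circ\varphi_t=\Ffunc[t\leq t]=\id$ and $\varphi_t\circ\psi_t=\id$, which hold trivially. Hence $\Ffunc$ is $0$-interleaved with itself and $d_I(\Ffunc,\Ffunc)=0$. Symmetry is then immediate from the symmetric shape of the definition: a pair $(\varphi,\psi)$ exhibiting an $\epsilon$-interleaving of $\Ffunc$ with $\Gfunc$ is, after swapping the two families, exactly a pair $(\psi,\varphi)$ exhibiting an $\epsilon$-interleaving of $\Gfunc$ with $\Ffunc$. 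Thus the set of $\epsilon$ for which $\Ffunc,\Gfunc$ are $\epsilon$-interleaved coincides with the corresponding set for $\Gfunc,\Ffunc$, and the defining infima agree (this also covers the case in which the value is $+\infty$).

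The substance of the argument is the triangle inequality, for which the key step is a composition lemma: if $\Ffunc,\Gfunc$ are $\epsilon$-interleaved via $(\varphi,\psi)$ and $\Gfunc,\Hfunc$ are $\delta$-interleaved via $(\varphi',\psi')$, then $\Ffunc,\Hfunc$ are $(\epsilon+\delta)$-interleaved. I would define the composed families by
$$\Phi_t \coloneqq \varphi'_{t+\epsilon}\circ\varphi_t\colon \Ffunc(t)\to\Hfunc(t+\epsilon+\delta),\qquad \Psi_t \coloneqq \psi_{t+\delta}\circ\psi'_t\colon \Hfunc(t)\to\Ffunc(t+\epsilon+\delta),$$
and check that $(\Phi,\Psi)$ satisfies the interleaving diagrams at parameter $\epsilon+\delta$. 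Concretely, one pastes the commuting triangle of the $\Gfunc,\Hfunc$-interleaving next to that of the $\Ffunc,\Gfunc$-interleaving, replaces the resulting middle composite $\psi'\circ\varphi'$ by a structure map of $\Gfunc$, and then slides that structure map past the remaining interleaving map to reach the outer triangle; the verification that the total composite equals $\Ffunc[t\leq t+2(\epsilon+\delta)]$ (and dually equals $\Hfunc[t\leq t+2(\epsilon+\delta)]$) uses precisely the naturality of the interleaving families, i.e.\ that they commute with the internal structure maps, together with functoriality of $\Ffunc$ and $\Hfunc$. This diagram chase is where I expect the only genuine work to lie.

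Granting the composition lemma, the triangle inequality for $d_I$ follows by an infimum argument. If $d_I(\Ffunc,\Gfunc)$ and $d_I(\Gfunc,\Hfunc)$ are both finite, then for every $\eta>0$ there exist a $\bigl(d_I(\Ffunc,\Gfunc)+\eta\bigr)$-interleaving of $\Ffunc,\Gfunc$ and a $\bigl(d_I(\Gfunc,\Hfunc)+\eta\bigr)$-interleaving of $\Gfunc,\Hfunc$, whose composite yields $d_I(\Ffunc,\Hfunc)\leq d_I(\Ffunc,\Gfunc)+d_I(\Gfunc,\Hfunc)+2\eta$; letting $\eta\to 0$ gives the claim, while the inequality is vacuous if either summand equals $+\infty$. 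Together with the first two properties, this shows that $d_I$ is an extended pseudometric.
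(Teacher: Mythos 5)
Your proof is correct. Note, however, that the paper does not prove this statement at all: it is imported verbatim, with a citation, from the reference (Thm.~3.21 of the cited Bubenik et al.\ work), so there is no internal proof to compare against. Your direct verification --- the $0$-self-interleaving via identities, symmetry by inspection of the definition, and the composition lemma feeding an infimum argument for the triangle inequality --- is the standard argument, and it is essentially the specialization to the poset $\bR$ of the proof in the cited reference, which works in greater generality with a monoid of translations on a preordered set and shows that composing interleavings relative to two translations yields an interleaving relative to their composite; your shifts by $\epsilon$ and $\delta$ play exactly the role of those translations, and your diagram chase (replace $\psi'\circ\varphi'$ by a structure map of $\Gfunc$, then slide it past $\psi$ using naturality) is the same mechanism. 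One small wording slip you may wish to repair: the infimum only guarantees an $\epsilon'$-interleaving for some $\epsilon' < d_I(\Ffunc,\Gfunc)+\eta$, not an interleaving at the value $d_I(\Ffunc,\Gfunc)+\eta$ itself. This does not affect your argument, since composing the $\epsilon'$- and $\delta'$-interleavings gives an $(\epsilon'+\delta')$-interleaving and hence $d_I(\Ffunc,\Hfunc)\leq \epsilon'+\delta' < d_I(\Ffunc,\Gfunc)+d_I(\Gfunc,\Hfunc)+2\eta$, which is all you need.
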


It is no coincidence that the distances computed in Ex.~\ref{ex:Bottleneck_distance} and Ex.~\ref{ex:Interleaving_distance} are the same. Indeed, there is a deep relation between p.f.d. persistence modules and their barcodes known as the isometry theorem.

\begin{theorem}\textbf{(Isometry Theorem
\cite{ChazalEtAl09_proximity, Lesnick15_ThyOfInterleaving})}
\label{thm:Isometry_theorem}
	The interleaving distance of a pair of p.f.d. persistence modules $\Vfunc,\Wfunc \in {Vect}^\mathbb{R}$ is equal to the bottleneck distance of their associated barcodes, i.e.
	$$d_I(\Vfunc,\Wfunc)=d_B(B(\Vfunc),B(\Wfunc)).$$
\end{theorem}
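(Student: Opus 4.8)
The plan is to prove the two inequalities $d_B \leq d_I$ and $d_I \leq d_B$ separately, using the barcode decomposition (Thm.~\ref{thm:Barcode_decomposition}) to reduce everything to interval modules. Throughout I fix a threshold $\delta > 0$ and freely translate between the three equivalent descriptions of closeness: an $\epsilon$-interleaving of $\Vfunc$ and $\Wfunc$; an $\epsilon$-matching of the diagrams $Dgm(\Vfunc)$ and $Dgm(\Wfunc)$, i.e.\ a bijection displacing every point by at most $\epsilon$ in the $\|\cdot\|_\infty$ of Def.~\ref{def:bottleneck_distance}; and the direct-sum decompositions $\Vfunc \cong \bigoplus_{I \in B(\Vfunc)} \Cfunc(I)$ and $\Wfunc \cong \bigoplus_{J \in B(\Wfunc)} \Cfunc(J)$ granted by the structure theorem.

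For the easy inequality $d_I \leq d_B$, I would start from a $\delta$-matching $\gamma$ witnessing $d_B(B(\Vfunc), B(\Wfunc)) < \delta$ and assemble a $\delta$-interleaving out of local building blocks glued by direct sums. The three blocks are: (i) if two off-diagonal intervals $[a,b)$ and $[c,d)$ satisfy $\max\{|a-c|,|b-d|\} \leq \delta$, then $\Cfunc([a,b))$ and $\Cfunc([c,d))$ are $\delta$-interleaved via the maps that act as the identity on $\mathbb F$ wherever both source and target are nonzero and as $0$ elsewhere; (ii) if an interval is matched to the diagonal, its distance $< \delta$ to the diagonal forces its length $< 2\delta$, and such a $\Cfunc(I)$ is $\delta$-interleaved with the zero module precisely because the structure map $\Cfunc(I)[t \leq t+2\delta]$ vanishes; (iii) a direct sum of $\delta$-interleavings is again a $\delta$-interleaving. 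Verifying that the interleaving triangles commute in cases (i) and (ii) is a short explicit check, and summing over $\gamma$ yields $d_I(\Vfunc, \Wfunc) \leq \delta$; letting $\delta$ decrease to $d_B$ gives the inequality.

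The hard inequality $d_B \leq d_I$ is the algebraic stability theorem, and this is where I expect the real work. Here I would follow the induced-matching strategy: any morphism $f \colon \Vfunc \to \Wfunc$ of p.f.d.\ persistence modules canonically induces a partial matching $B(\Vfunc) \to B(\Wfunc)$, obtained by tracking which interval summands survive in $\Image f$, $\Ker f$, and $\Coker f$. The key technical lemma is that this induced matching is compatible with shifts, so that when $\varphi$ and $\psi$ arise from an $\epsilon$-interleaving, the two triangle identities $\psi_{t+\epsilon}\varphi_t = \Vfunc[t \leq t+2\epsilon]$ and $\varphi_{t+\epsilon}\psi_t = \Wfunc[t \leq t+2\epsilon]$ force the induced matchings to overlap into a single $\epsilon$-matching of the full barcodes.

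The main obstacle is precisely this last step: proving that the induced matching displaces every bar by at most $\epsilon$. This requires careful bookkeeping of the endpoints of the interval summands of $\Image f$, $\Ker f$, and $\Coker f$, combined with the interleaving relations above, to bound how far each matched pair of bars can drift. Once this endpoint analysis is in place, combining the two inequalities and taking infima over $\epsilon$ yields $d_I(\Vfunc,\Wfunc) = d_B(B(\Vfunc), B(\Wfunc))$, completing the proof.
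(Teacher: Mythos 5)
The first thing to note is that the paper does not prove this theorem at all: it is imported from the literature, with \cite{ChazalEtAl09_proximity} credited for $d_B \leq d_I$ and \cite{Lesnick15_ThyOfInterleaving} for the converse inequality, so there is no internal argument to compare yours against; the comparison has to be with the cited proofs. Your plan reconstructs that literature route. The easy half, $d_I \leq d_B$, is in good shape: the three building blocks you list (interleaving two bars whose endpoints differ by at most $\delta$, interleaving a bar of length at most $2\delta$ with the zero module via the vanishing of its $2\delta$-shift map, and closure of $\delta$-interleavings under direct sums) are exactly the standard verification, and combined with Thm.~\ref{thm:Barcode_decomposition} and a limiting argument in $\delta$ they do give the inequality. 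One small remark: for the hard half you choose the induced-matching argument of Bauer and Lesnick, which is a different (and cleaner) proof than the interpolation/box-lemma argument in the reference the paper actually cites for that inequality; either is legitimate.

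The genuine gap is in that hard half, $d_B \leq d_I$. What you defer as ``careful bookkeeping of the endpoints'' is not a routine check; it is the entire content of the algebraic stability theorem, and your proposal asserts it rather than proves it. Moreover, the mechanism you describe is slightly off: one does not form induced matchings from both $\varphi$ and $\psi$ and make them ``overlap.'' The argument applies the induced-matching construction to the \emph{single} morphism $\varphi \colon \Vfunc \to \Wfunc(\cdot + \epsilon)$, and the two triangle identities are used only to verify its hypotheses, namely that $\Ker \varphi$ and $\Coker \varphi$ are annihilated by the $2\epsilon$-shift ($\psi\varphi$ being the shift of $\Vfunc$ kills $\Ker\varphi$; $\varphi\psi$ being the shift of $\Wfunc$ forces every class of $\Wfunc$ into $\Image\varphi$ after $2\epsilon$). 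The matching itself is built only from $\Image \varphi$, by factoring $\varphi$ as a surjection followed by an injection: a surjection of p.f.d.\ modules matches bars preserving left endpoints and only shortening right endpoints, an injection matches bars preserving right endpoints and only delaying left endpoints, and the $2\epsilon$-triviality of kernel and cokernel is what caps the shortening and delaying by $\epsilon$ after unshifting. Until that lemma is proved (or explicitly invoked as the induced matching theorem of Bauer--Lesnick), your write-up establishes only $d_I \leq d_B$, i.e.\ half of the isometry theorem.
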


F. Chazal et al.~\cite{ChazalEtAl09_proximity} proved that
$d_I(\Vfunc,\Wfunc) \geq d_B(B(\Vfunc),B(\Wfunc)),$
and more recently, M. Lesnick \cite{Lesnick15_ThyOfInterleaving} proved the converse inequality.

As Thm.~\ref{thm:stability_classical} illustrates, the study of the bottleneck distance $d_B$ and its generalizations is of central importance to TDA. Thm.~\ref{thm:Isometry_theorem} allows us to prove useful results concerning $d_B$ by understanding the interleaving distance.

We next state some results we will use on the interleaving distance. We start by recalling that some functors can be viewed as non-expansive maps:

\begin{theorem}
\cite[Cor.~of Thm.~3.16]{Bubenik_GenPersMod_15}
\label{thm:Functors_are_1Lipschitz_Real_case}
	Let $\Hfunc : \CC \to \DD$ be a functor between arbitrary categories $\CC, \DD$.
	Then the functor $\Hfunc\circ- : \CC^{\bR}\to\DD^{\bR}$, $\Ffunc\mapsto\Hfunc\Ffunc$, defined by post-composing with $\Hfunc$, is 1-Lipschitz.
	That is, for any two GPMs $\Ffunc, \Gfunc : \bR \to\CC$, we have 
    $$d_I(\Hfunc\Ffunc, \Hfunc\Gfunc)\leq d_I(\Ffunc, \Gfunc).$$
\end{theorem}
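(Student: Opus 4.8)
The plan is to show directly that post-composition with $\Hfunc$ carries an $\epsilon$-interleaving of $\Ffunc$ and $\Gfunc$ into an $\epsilon$-interleaving of $\Hfunc\Ffunc$ and $\Hfunc\Gfunc$; the Lipschitz bound then follows immediately by taking infima over $\epsilon$.

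First I would unpack the interleaving data. Suppose $\Ffunc, \Gfunc : \bR \to \CC$ are $\epsilon$-interleaved, witnessed by natural transformations $\varphi_t : \Ffunc(t) \to \Gfunc(t+\epsilon)$ and $\psi_t : \Gfunc(t) \to \Ffunc(t+\epsilon)$ for which the two triangles in the definition of interleaving commute. I would then apply $\Hfunc$ objectwise to produce candidate interleaving morphisms $(\Hfunc\varphi)_t \coloneqq \Hfunc(\varphi_t) : \Hfunc\Ffunc(t) \to \Hfunc\Gfunc(t+\epsilon)$ and, symmetrically, $(\Hfunc\psi)_t \coloneqq \Hfunc(\psi_t)$. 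One bookkeeping point to record here is that post-composition commutes with the $\epsilon$-shift of the indexing poset, i.e. $\Hfunc \circ (\Gfunc(\cdot + \epsilon)) = (\Hfunc\Gfunc)(\cdot + \epsilon)$, so that the targets above are literally the shifted modules and the interleaving conditions are meaningful.

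The core step is to verify that this data is again an interleaving. First, $\Hfunc\varphi$ and $\Hfunc\psi$ are natural transformations: each naturality square for $\varphi$ is a commutative square in $\CC$, and applying the functor $\Hfunc$, which preserves composition, turns it into the corresponding commutative naturality square in $\DD$. Second, the two interleaving triangles for $\Hfunc\Ffunc, \Hfunc\Gfunc$ are precisely the images under $\Hfunc$ of the interleaving triangles for $\Ffunc, \Gfunc$: here I use that $\Hfunc$ sends the structure map $\Ffunc[t \leq t+2\epsilon]$ to $(\Hfunc\Ffunc)[t \leq t+2\epsilon]$ by the definition of the post-composed module, and that $\Hfunc$ preserves the composites $\psi_{t+\epsilon}\circ\varphi_t$ and $\varphi_{t+\epsilon}\circ\psi_t$. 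Since a functor maps commuting diagrams to commuting diagrams, commutativity transfers, so $\Hfunc\Ffunc$ and $\Hfunc\Gfunc$ are $\epsilon$-interleaved.

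Finally I would conclude. The argument shows that whenever $\Ffunc, \Gfunc$ are $\epsilon$-interleaved, so are $\Hfunc\Ffunc, \Hfunc\Gfunc$; hence the set of admissible $\epsilon$ for the pair $(\Ffunc, \Gfunc)$ is contained in that for $(\Hfunc\Ffunc, \Hfunc\Gfunc)$. Taking the infimum over a larger set can only decrease it, yielding $d_I(\Hfunc\Ffunc, \Hfunc\Gfunc) \leq d_I(\Ffunc, \Gfunc)$. I do not expect a genuine obstacle: the entire proof rests on the single fact that functors preserve identities and composition, hence commutative diagrams. The only care needed is the routine verification that naturality and the two triangle identities survive the application of $\Hfunc$, together with the shift-compatibility noted above.
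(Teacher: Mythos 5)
Your proof is correct. Note that the paper itself does not prove this statement---it is quoted from Bubenik et al.\ \cite{Bubenik_GenPersMod_15}---and your argument is precisely the standard one given there: a functor preserves commutative diagrams, so applying $\Hfunc$ objectwise carries any $\epsilon$-interleaving $(\varphi,\psi)$ of $\Ffunc,\Gfunc$ to an $\epsilon$-interleaving $(\Hfunc\varphi,\Hfunc\psi)$ of $\Hfunc\Ffunc,\Hfunc\Gfunc$ (the shift-compatibility $\Hfunc\circ(\Gfunc(\cdot+\epsilon))=(\Hfunc\Gfunc)(\cdot+\epsilon)$ you record is the right bookkeeping point), and the inequality follows by comparing infima over the nested sets of admissible $\epsilon$, with the case $d_I(\Ffunc,\Gfunc)=\infty$ trivial.
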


By Thm.~\ref{thm:Functors_are_1Lipschitz_Real_case}, the persistent $p^\text{th}$ homology $ H_p\circ-$ forms a 1-Lipschitz map with respect to the interleaving distance.
That is, for any pair of persistence spaces $\X_{*}:\bR\to\Top$, $t\mapsto{X}_t$ and ${Y}_{*}:\bR\to\Top$, $t\mapsto{Y}_t$ 
we have 
$$d_I( H_p{X}_{*}, H_p{Y}_{*})\leq d_I({X}_{*},{Y}_{*}).$$

\begin{theorem}\cite{ChazalEtAl09_proximity}
\textbf{(The sublevel-set functor is non-expansive)}
\label{thm:SublevelSet_1Lipschitz}
The sublevel-set filtration map $\SS \colon (\Top\downarrow\bR) \longrightarrow \Top^{\bR}$ is 1-Lipschitz, \emph{i.e.,} if $f \colon X \longrightarrow \mathbb R$ and $g \colon Y \longrightarrow \mathbb R$ are continuous, then
$$d_I(\SS(f),\SS(g))\leq d_{\infty}(f, g).$$
\end{theorem}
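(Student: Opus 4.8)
The plan is to bound the interleaving distance by producing, for each homeomorphism $\Phi \colon X \to Y$, an explicit interleaving of $\SS(f)$ and $\SS(g)$ at level $\epsilon := \|f - g\circ\Phi\|_\infty$, and then to take the infimum over all such $\Phi$. If $X$ and $Y$ are not homeomorphic there is nothing to prove, since then $d_\infty(f,g) = \infty$ by convention; so I assume at least one homeomorphism exists and fix one, $\Phi$.

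The key observation is a containment of sublevel sets. For every $x \in X$ we have $|f(x) - g(\Phi(x))| \le \epsilon$, so if $f(x) \le t$ then $g(\Phi(x)) \le t + \epsilon$. This shows that $\Phi$ maps $f^{-1}(-\infty, t]$ into $g^{-1}(-\infty, t+\epsilon]$, and symmetrically (applying the same estimate to $\Phi^{-1}$) that $\Phi^{-1}$ maps $g^{-1}(-\infty,t]$ into $f^{-1}(-\infty, t+\epsilon]$. I would therefore define $\varphi_t \colon \SS(f)(t) \to \SS(g)(t+\epsilon)$ as the corestriction of $\Phi$ and $\psi_t \colon \SS(g)(t) \to \SS(f)(t+\epsilon)$ as the corestriction of $\Phi^{-1}$; both are continuous, being restrictions of continuous maps to subspaces.

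It then remains to check that these families are natural transformations and that the two interleaving triangles commute. Both are immediate from the fact that every map in sight is a restriction of either $\Phi$, $\Phi^{-1}$, or the identity, while all the structure maps of $\SS(f)$ and $\SS(g)$ are subspace inclusions: the naturality squares commute because restricting $\Phi$ to a smaller sublevel set agrees with restricting it to a larger one, and the composite $\psi_{t+\epsilon}\circ\varphi_t$ is the restriction of $\Phi^{-1}\circ\Phi = \id_X$, which is exactly the inclusion $\SS(f)[t \le t+2\epsilon]$ (and symmetrically for the other composite). Hence $\SS(f)$ and $\SS(g)$ are $\epsilon$-interleaved, so $d_I(\SS(f),\SS(g)) \le \|f - g\circ\Phi\|_\infty$; taking the infimum over $\Phi$ yields $d_I(\SS(f),\SS(g)) \le d_\infty(f,g)$. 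The only genuinely delicate point is bookkeeping the direction of the inequalities in the sublevel-set containment (and hence the $+\epsilon$ shifts); once that is pinned down, the naturality and interleaving conditions are formal.
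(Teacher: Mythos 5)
Your proof is correct. The paper itself gives no proof of this theorem---it is quoted from the cited reference of Chazal et al.---but your argument is exactly the standard one used there: translate the bound $\|f - g\circ\Phi\|_\infty \le \epsilon$ into the sublevel-set containments $\Phi\bigl(f^{-1}(-\infty,t]\bigr) \subseteq g^{-1}(-\infty,t+\epsilon]$ and $\Phi^{-1}\bigl(g^{-1}(-\infty,t]\bigr) \subseteq f^{-1}(-\infty,t+\epsilon]$, observe that the resulting corestrictions of $\Phi$ and $\Phi^{-1}$ are automatically natural and satisfy the interleaving triangles because all structure maps are inclusions and $\Phi^{-1}\circ\Phi = \mathrm{id}$, and then take the infimum over homeomorphisms $\Phi$ to recover the paper's definition of $d_\infty$ (including the vacuous case where no homeomorphism exists and $d_\infty = \infty$).
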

	
\section{Transferred $A_\infty$-coalgebras on $  H_*(X)$}
\label{sec:A_inftyCoalgebras}

One can extract topological information of a space from the structure of its homology. To do this beyond the Betti numbers, one can study the relationship between homology classes.
For instance, a torus $\mathbb T = \bS^1 \times \bS^1$ and a wedge of spheres $\bS^1 \vee \bS^2 \vee \bS^1$ have the same Betti numbers but their homology classes are related in very different ways.
On the one hand, $\mathbb T$ is a surface of revolution whose generatrix curve is a circumference which creates non-trivial homology. This hints some relation between entities of dimension 1 (the curve) and 2 (the surface), which, in terms of cohomology, can be explained as follows: choosing an appropriate basis of the cohomology $H^*(\bT)$, the generator of $H^2(\bT)$ is the cup product of the two generators of $H^1(\bT)$. In contrast, the generatrix of a sphere $\bS^2$ does not produce non-trivial homology in $\bS^1 \vee \bS^2 \vee \bS^1$. In terms of cohomology, the generator of $H^2(\bS^1 \vee \bS^2 \vee \bS^1)$ is nobody's cup product, and the cup product of the 2 generators of $H^1(\bS^1 \vee \bS^2 \vee \bS^1)$ is 0, regardless of the chosen basis of $H^*(\bS^1 \vee \bS^2 \vee \bS^1)$.
$A_\infty$-structures contain all the information provided by the cup product and much more. For instance, there are links which cannot be distinguished by the cup product alone, but which can be distinguished using $A_\infty$-structures
\cite{Massey68/98}, \cite[\S 3]{Belchi17}. Later in this section, we will mention more examples of spaces for which the $A_\infty$-structure provide much more detailed topological information.

In this section, we list some basic notions and notation we will need to understand the meaning of $A_\infty$ persistent homology and its stability. We will restrict ourselves to $A_\infty$-coalgebra structures on graded vector spaces only, although they can be defined in more general contexts.

\begin{definition}\label{DEF:A_infty-coalgebra}
An \textbf{${\bm A_\infty}$-coalgebra structure} 
$\{\De_n\}_{n \geq 1}$ on a graded vector space $C$
is a family of maps
$$\Delta_n\colon  C\longrightarrow C^{\otimes n}$$
of degree $n-2$
such that, for all $n \geq 1$,
the following  \emph{Stasheff identity} holds:
$$
\text{SI}(n)\colon 
 \sum_{i=1}^n \sum_{j=0}^{n-i} (-1)^{i+j+ij}
		\left( 1^{\otimes n-i-j} \otimes \De_i \otimes 1^{\otimes j} \right)
		\,\De_{n-i+1} = 0. $$	
\end{definition}

If $(C, \{\De_n\}_{n \geq 1})$
is an $A_\infty$-coalgebra,
the identity $\text{SI}(1)$ states that 
$\De_1$ is a differential on $C$ and 
$\text{SI}(3)$ states that 
the comultiplication
$\De_2$ is coassociative up to the chain homotopy $\De_3$.
Any differential graded coalgebra $(C,\dlie,\De)$
can be viewed as an $A_\infty$-coalgebra 
$(C, \{\De_n\}_{n \geq 1})$ by setting
$ \Delta_1 = \dlie, \Delta_2 = \De,$ and $\Delta_n =0$ for all $n > 2.$
%
An $A_\infty$-coalgebra
$(C, \{\De_n\}_{n \geq 1})$
is called \textbf{minimal}
if 
$\De_1=0.$

\begin{definition}\label{DEF:Morph-A_coalg}
A \textbf{morphism of $\bm{A_\infty}$-coalgebras} 
$$ f\colon (C,\{\De_n\}_{n\ge 1})\to (C',\{\De'_n\}_{n\ge 1})$$
is a family of linear maps
 $$
 f_{(m)}\colon C\longrightarrow {C'}^{\otimes m},\qquad m\ge 1,
 $$
of degree $m-1$, such that
for each $i\ge 1$,
the following identity holds:
$$
\text{MI}(i)\colon \sum_{\substack{
p+q+k=i\\ q \geq 1, p,k \geq 0
}}
(1^{\otimes p}\otimes\Delta'_q\otimes 1^{\otimes k})f_{(p+k+1)}=
\sum_{\substack{
k_1+\dots+k_\ell=i\\ l, k_j \geq 1
}}
(f_{(k_1)}\otimes\dots\otimes f_{(k_\ell)})\Delta_\ell.
$$

We say that the
morphism of $A_\infty$-coalgebras $f$
is:
\begin{itemize}
\item
an \textbf{isomorphism}
if $f_{(1)}$ is an isomorphism (of vector spaces),
\item
a \textbf{quasi-isomorphism}
if $f_{(1)}$ induces an isomorphism (of vector spaces)
in homology.
\end{itemize}
\end{definition}

There are some trivial $A_\infty$-coalgebra structures one can always endow a graded vector space $C$ with, such as the one given by $\Delta_n = 0$ for all $n$.
We hence only consider \emph{transferred} $A_\infty$-coalgebras.

\begin{definition}\label{DEF:good-A_coalg}
We will say that an $A_\infty$-coalgebra
$\left( H_*(X), \{ \De_n \}_n \right)$ 
on the homology of a space $X$
is a \textbf{transferred
$\bm{A_\infty}$-coalgebra} (induced by $X$) if it is minimal and
quasi-isomorphic
to the 
$A_\infty$-coalgebra
$$\left( C_*(X), \{\dlie, \De, 0, 0, \ldots \} \right),$$
where $\left( C_*(X), \dlie \right)$
denotes the singular chain complex of $X$
and $\De$ denotes the Alexander-Whitney diagonal.
We will drop the \emph{`induced by $X$'} from the notation
when no confusion is possible.
\end{definition}

The dual of this notion consists of transferred 
$A_\infty$-algebras $\{\mu_n\}_{n}$ on the cohomology of $X$,
$H^*(X)$, where $\mu_2$ coincides with the cup product.
Hence, transferred $A_\infty$-structures encode all the information in the homology
groups of $X$ and in its cohomology algebra as well, but there is more.
For instance, T.~Kadeishvili proved
that under mild conditions on a topological space $X$,
any transferred $A_\infty$-algebra on its cohomology
determines the cohomology of its loop space \cite[Prop.~2]{Kadeishvili80}, $H^*(\Omega X)$,
whereas the cohomology ring of $X$ alone
does not; in \cite[Thm.~1.3]{Belchi-Murillo15}, one can find a way to build pairs of spaces with isomorphic homology groups and isomorphic cohomology algebras 
but non-isomorphic transferred $A_\infty$-coalgebras;
in \cite[\S 3]{Belchi17}, \cite{Massey68/98}, one can find examples of links which are told apart by transfered $A_\infty$-structures, which cannot be told apart by using the cup product alone. This can be done thanks to the relation between Massey products and $A_\infty$-structures \cite{Buijs-Moreno-Murillo18_Massey}.

An immediate consequence of Definition \ref{DEF:good-A_coalg} is that
all transferred
$A_\infty$-coalgebras on $H_*(X)$
induced by $X$
are isomorphic.
The following is a folklore result, of which one can find a proof in
\cite[Cor.~3.3]{Belchi17}.

\begin{theorem}
\textbf{(Homotopy invariance of $\bm{\dim \Ker {\De_m}_{| H_p(X)}}$)}
\label{COR:min-invariant}
Let $ \{\De_n\}_n $ be a transferred 
$A_\infty$-coalgebra structure on the homology of a space $X$, and let us set
$$
k(X) \coloneqq
\begin{cases}
 \min \{ n \,|\, \Delta_n \neq 0 \} , & \text{if } \{ n \,|\, \Delta_n \neq 0 \} \neq \emptyset \\
+\infty, & \text{otherwise.}
\end{cases}
$$
Then, the number $k(X)\in\mathbb{Z}\cup \{+\infty\}$ and the integers
$ \dim \Ker {\De_m}_{|  H_p(X)} $ (for integers $m \leq k(X)$ and $p\geq 0$) are
independent of the choice of transferred $A_\infty$-coalgebra structure on $H_*(X)$. 
Moreover, since homotopy equivalent spaces induce isomorphic transferred $A_\infty$-coalgebras, $k(X)$ and every such $ \dim \Ker {\De_m}_{|  H_p(X)} $ are invariants of the homotopy type of $X$.
\end{theorem}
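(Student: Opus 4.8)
The plan is to reduce everything to the structural fact stated just above the theorem, namely that any two transferred $A_\infty$-coalgebra structures on $H_*(X)$ are isomorphic as $A_\infty$-coalgebras (an immediate consequence of Def.~\ref{DEF:good-A_coalg}), and then to track how such an isomorphism interacts with the operations $\De_m$ through the morphism identities MI$(i)$. So I would fix two transferred structures $\{\De_n\}_n$ and $\{\De'_n\}_n$ on $H_*(X)$, both minimal, together with an $A_\infty$-coalgebra isomorphism $f$ between them. Writing $g \coloneqq f_{(1)}$, Def.~\ref{DEF:Morph-A_coalg} tells us that $g$ is a degree-$0$ isomorphism of graded vector spaces, so it restricts to an isomorphism $H_p(X) \to H_p(X)$ in each degree $p$.

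The key computational step I would carry out first is a simplification of MI$(m)$ under the assumption that $\De_n = \De'_n = 0$ for every $n < m$. On the left-hand side of MI$(m)$ the constraint $p+q+k=m$ with $q\geq 1$ forces $\De'_q = 0$ unless $q=m$ (hence $p=k=0$), leaving only $\De'_m g$. On the right-hand side the constraint $k_1 + \cdots + k_\ell = m$ with all $k_j \geq 1$ forces $\De_\ell = 0$ unless $\ell = m$ (hence every $k_j = 1$), leaving only $g^{\otimes m}\De_m$. Thus MI$(m)$ collapses to
$$\De'_m \circ g = g^{\otimes m} \circ \De_m.$$
Any Koszul signs hidden in the conventions are units, so they are irrelevant for the conclusions about nonvanishing and kernels.

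From this relation the theorem follows quickly. Write $k \coloneqq k(X)$ for $\{\De_n\}_n$ and $k'$ for $\{\De'_n\}_n$, and assume without loss of generality $k \leq k'$. If $k = +\infty$ then $k' = +\infty$ as well, every operation of both structures vanishes, and all kernels equal $H_p(X)$. If $k < \infty$, then $\De_n = \De'_n = 0$ for $n < k$ by definition of $k$ and $k'$, so the displayed relation applies at $m=k$; since $g$ and $g^{\otimes k}$ are isomorphisms and $\De_k \neq 0$, it forces $\De'_k \neq 0$, whence $k' \leq k$ and therefore $k = k'$. For the kernels, for every $m \leq k$ the vanishing hypothesis below $m$ holds, so $\De'_m g = g^{\otimes m}\De_m$ is available: when $m < k$ both operations vanish and each kernel is all of $H_p(X)$, while at $m = k$ the relation shows degree by degree that $g$ restricts to an isomorphism $\Ker {\De_m}_{| H_p(X)} \xrightarrow{\sim} \Ker {\De'_m}_{| H_p(X)}$, so the two dimensions coincide. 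This establishes independence of the choice of transferred structure.

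Finally, for homotopy invariance I would invoke the stated fact that a homotopy equivalence $X \simeq Y$ induces an isomorphism of transferred $A_\infty$-coalgebras, now between a structure on $H_*(X)$ and one on $H_*(Y)$. The argument above is insensitive to whether $g = f_{(1)}$ is an endomorphism or an isomorphism onto a different graded vector space, so it yields $k(X) = k(Y)$ together with the matching of all the relevant kernel dimensions. The main obstacle I anticipate is purely bookkeeping in the second step: verifying carefully that under the vanishing hypothesis no other terms of MI$(m)$ survive (in particular that all mixed terms involving the higher components $f_{(j)}$ are killed), and keeping the degree restriction to $H_p$ consistent so that the map induced by $g$ is genuinely an isomorphism of subspaces of $H_p(X)$.
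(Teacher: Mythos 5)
Your proof is correct and is essentially the intended argument: the paper itself does not reprove this folklore statement (it only cites Belch\'{\i} 2017, Cor.~3.3), and the proof found there rests on exactly your key step --- given an isomorphism $f$ between two transferred (hence minimal) structures, the morphism identity MI$(m)$ collapses to $\Delta'_m \, f_{(1)} = f_{(1)}^{\otimes m}\,\Delta_m$ once all operations below $m$ vanish, which forces $k=k'$ and lets the degree-$0$ isomorphism $f_{(1)}$ carry $\Ker {\Delta_m}_{|H_p(X)}$ isomorphically onto $\Ker {\Delta'_m}_{|H_p(X)}$. Your bookkeeping is sound (in particular, the kernel correspondence needs only injectivity of $f_{(1)}^{\otimes m}$ and surjectivity of $f_{(1)}$, not an inverse $A_\infty$-morphism), so the proposal matches the proof the paper points to.
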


We finally recall a classical results used for computing and working with transferred $A_\infty$-structures which we will use in the proof of the functoriality of $A_\infty$ persistent homology (Thm. \ref{thm:Functoriality}).

\begin{theorem}
\cite{Kadeishvili80,
Loday-Vallette12}
\textbf{(Homotopy Transfer Theorem)} 
\label{thm:HTT}
Let the following be a diagram of chain complexes,
\begin{equation}\label{eq:transfer_diagram}
\xymatrix{ \ar@(ul,dl)@<-5.5ex>[]_\phi  & (M,d) \ar@<0.75ex>[r]^-\pi  & (N,d) \ar@<0.75ex>[l]^-\iota }
\end{equation}
where $(N, d)$ is a chain complex, $(M,d)$ is a differential graded coalgebra with comultiplication $\Delta$, and the degree 0 chain maps $\pi$ and $\iota$ and the degree 1 chain homotopy $\phi$ make the following hold:
$\pi \iota={\rm id}_N$, $\pi  \phi=\phi\iota=\phi^2=0$ and $\phi$ is a chain homotopy between ${\rm id}_M$ and $\iota \pi $, \emph{i.e.,} $\phi d+d\phi=\iota \pi -{\rm id}_M$. 
Then, there is an explicit minimal $A_\infty$-coalgebra structure $\{\De_n\}_n$ on $N$ with $\Delta_2=\pi^{\otimes 2} \Delta \iota$ and there are morphisms of $A_\infty$-coalgebras
$$
\xymatrix{M \ar@<0.75ex>[r]^-\Pi& N\ar@<0.75ex>[l]^-I },
$$
such that $\Pi_{(1)}=\pi$ and $I_{(1)}=\iota$.
\end{theorem}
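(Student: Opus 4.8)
The plan is to realise the transferred operations as sums over planar rooted trees and to verify the coalgebra Stasheff identities $\mathrm{SI}(n)$; I would organise the bookkeeping through the Homological Perturbation Lemma (HPL), which produces the tree formula automatically and makes all the $A_\infty$ relations fall out of a single square-zero identity. The key reformulation is that a minimal $A_\infty$-coalgebra structure on $N$ is the same datum as a square-zero derivation $D$ of degree $-1$ on the free graded algebra $T(s^{-1}N)$ (the cobar construction on $N$), the arity-$n$ component of $D$ on generators recording $\De_n$; likewise, morphisms of $A_\infty$-coalgebras correspond to morphisms of the associated cobar DG algebras. Under this dictionary the strict DG coalgebra $(M,\dlie,\De)$ furnishes a cobar differential $D_M$ on $T(s^{-1}M)$, whose linear part comes from $\dlie$ and whose quadratic part comes from $\De$.

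First I would lift the contraction to the cobar level by the \emph{tensor trick}. Set $I=\bigoplus_{k\ge 0}(s^{-1}\iota s)^{\otimes k}$ and $\Pi=\bigoplus_{k\ge 0}(s^{-1}\pi s)^{\otimes k}$, and assemble a degree $+1$ homotopy $\Phi$ on $T(s^{-1}M)$ out of $\phi$ by the standard telescoping formula that places a single $\phi$, copies of $\iota\pi$ to its left and copies of $\mathrm{id}$ to its right. A direct check (using $\pi\iota=\mathrm{id}_N$) shows $\Pi I=\mathrm{id}$, and the side conditions $\phi^2=\pi\phi=\phi\iota=0$ propagate to $\Phi^2=\Pi\Phi=\Phi I=0$ for the lifted contraction. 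The cobar differential $D_M$ is then a perturbation $\delta$ (the part involving $\De$) of the internal differential induced by $\dlie$, and because $M$ is conilpotent the operator $\delta\Phi$ is locally nilpotent on each tensor-length summand, so the geometric series below terminate and HPL applies.

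HPL then outputs a transferred differential on $T(s^{-1}N)$, together with perturbed comparison maps, given by the finite sums
$$D_N \;=\; D_{\mathrm{lin}} + \Pi\Big(\textstyle\sum_{k\ge 0}(\delta\Phi)^k\Big)\delta\, I, \qquad I_\infty \;=\; \Big(\textstyle\sum_{k\ge 0}(\Phi\delta)^k\Big) I, \qquad \Pi_\infty \;=\; \Pi\Big(\textstyle\sum_{k\ge 0}(\delta\Phi)^k\Big).$$
I would then check three things: (i) $D_N^2=0$ is automatic from HPL, and unwinding it on generators is precisely the family of identities $\mathrm{SI}(n)$; (ii) $D_N$ is in fact a \emph{derivation} of $T(s^{-1}N)$, so that it is determined by its restriction to generators and hence genuinely assembles into operations $\De_n\colon N\to N^{\otimes n}$ rather than an arbitrary perturbation; and (iii) expanding the series by tensor length reproduces the explicit tree formula $\De_n=\sum_T \pm\,\pi^{\otimes n}(\text{composite built from }\De\text{ at vertices and }\phi\text{ at internal edges})\iota$, whose $n=2$ (single-vertex, $k=0$) term is exactly $\De_2=\pi^{\otimes 2}\De\iota$. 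Minimality is the arity-one statement $\De_1=\pi\dlie\iota=d_N=0$, which holds in the transferred setting at hand (as in the application $N=H_*(X)$ with zero differential). Finally, $I_\infty$ and $\Pi_\infty$ are morphisms of cobar DG algebras and therefore morphisms of $A_\infty$-coalgebras $I\colon N\to M$ and $\Pi\colon M\to N$; reading off their arity-one parts gives $I_{(1)}=\iota$ and $\Pi_{(1)}=\pi$, i.e. all the morphism identities $\mathrm{MI}(i)$.

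I expect the main obstacle to be step (ii) together with the sign bookkeeping in the tensor trick: constructing $\Phi$ with correct Koszul signs, verifying the lifted side conditions, and then confirming that the HPL-transferred differential stays a derivation so that the output is honestly an $A_\infty$-coalgebra. Equivalently, in a purely explicit treatment one would bypass HPL and verify $\mathrm{SI}(n)$ by the sign-controlled combinatorics of trees, where the three side conditions $\phi^2=\pi\phi=\phi\iota=0$ are exactly what annihilate the undesired boundary terms and let the remaining sum telescope against the homotopy relation $\phi\dlie+\dlie\phi=\iota\pi-\mathrm{id}_M$. Convergence is not a genuine difficulty here: conilpotency makes every series above terminate on each summand.
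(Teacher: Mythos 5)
Your proposal cannot be compared against an argument in the paper itself: Thm.~\ref{thm:HTT} is stated there as a known result, cited from \cite{Kadeishvili80,Loday-Vallette12}, with no proof given. Measured against the proofs in those references, your route is the standard perturbation-theoretic one (the tensor trick plus the Homological Perturbation Lemma, as in Gugenheim--Lambe--Stasheff and, in tree form, in Loday--Vallette), as opposed to Kadeishvili's original induction on arity, which constructs the operations one arity at a time and verifies $\mathrm{SI}(n)$ directly. Your HPL formulas for the transferred differential and the perturbed comparison maps are the correct ones, the $k=0$ term does yield $\Delta_2=\pi^{\otimes 2}\Delta\iota$, minimality is correctly reduced to $d_N=0$ (which is the situation the paper applies the theorem to, namely $N=H_*(X)$ with zero differential), and you rightly single out your step (ii) as the point carrying real content: the HPL alone transfers a differential, and one must invoke the multiplicative compatibility of the tensor-trick contraction (the ``algebra perturbation lemma'') to conclude that $D_N$ is a derivation and that $I_\infty,\Pi_\infty$ are algebra maps, hence $A_\infty$-coalgebra morphisms with linear parts $\iota$ and $\pi$.

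There is, however, one genuine error in the write-up: the convergence argument. On the cobar construction the perturbation $\delta$ raises tensor length by one while $\Phi$ preserves it, so $(\delta\Phi)^k$ maps the length-$\ell$ summand into the length-$(\ell+k)$ summand; it is not an endomorphism of any summand, and nothing makes it vanish for large $k$. Indeed, the arity-$n$ component of $D_N$ on generators is precisely the $k=n-2$ term of your series, and transferred coproducts $\Delta_n$ can be nonzero for infinitely many $n$ (this is the very phenomenon $A_\infty$-persistence is designed to detect), so the series genuinely do not terminate. Conilpotency of $M$ is beside the point: vanishing of iterated reduced coproducts of an element does not force $(\delta\Phi)^k$ to vanish, because $\Phi$ interposes $\phi$'s and $\iota\pi$'s between the coproducts (and conilpotency of singular chains with the Alexander--Whitney diagonal is itself not something you may assume). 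The correct, and standard, statement is that the series converge because $\delta$ strictly raises the word-length filtration: every fixed tensor-length component of $D_N$, $I_\infty$ or $\Pi_\infty$ receives contributions from only finitely many $k$, so all three are well defined on the completed tensor algebra $\widehat{T}(s^{-1}N)=\prod_{k\geq 0}(s^{-1}N)^{\otimes k}$. This forces a corresponding correction to your dictionary: the square-zero derivations encoding $A_\infty$-coalgebra structures, and the algebra maps encoding their morphisms, live on the completed cobar construction and must be continuous for the word-length topology, not on the plain tensor algebra $T(s^{-1}N)$; uncompleted (co)tensor constructions suffice only in the dual, $A_\infty$-algebra, version of the theorem, where coderivations of the tensor coalgebra are determined by corestriction. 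With the filtered/complete form of the HPL substituted for your ``locally nilpotent'' justification, the rest of your outline goes through.
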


Building a diagram of the form (\ref{eq:transfer_diagram}) with $M\cong C_*(X)$ and $N\cong H_*(X)$ amounts to building a transferred $A_\infty$-coalgebra structure on $H_*(X)$.

\section{$A_\infty$ persistent homology: topological estimation and stability}
\label{sec:stability_of_Ainfty}

The barcode of $H_p({X}_*)$ from classical persistent homology recovers information only at the level of homology groups. In contrast, the barcodes in $A_\infty$ persistent homology (which we recall in this section) consist of partial information from $A_\infty$-(co)algebras in (co)homology, therefore enhancing persistent homology with a greater discriminatory power.
In a sense, $A_\infty$ persistent homology could be viewed as a way to relate the different barcodes from classical persistent homology.

It would therefore be desirable to have stability results for $A_\infty$ persistent homology similar to those seen in \S \ref{sec:persistenceAndFunctoriality} for classical persistence.
The issue is that $A_\infty$ persistent homology is not functorial in general. Indeed, \cite[Thm. 3.1]{Belchi-Murillo15} illustrates that given 
transferred $A_\infty$-coalgebra structures 
$\left(H_*(X), \{\Delta_n^X\}_n \right)$,
$\left(H_*(Y), \{\Delta_n^Y\}_n \right)$
and a continuous map $f \colon X \longrightarrow Y,$
the inclusion
\begin{equation}
\label{eq:fKer_inKer}
  H_p(f)(\Ker {\Delta_n^X}_{| H_p(X)}) \subseteq \Ker {\Delta_n^Y}_{| H_p(Y)}
\end{equation}
does not need to hold, where $H_p(f)$ denotes the map induced in $p^{\text{th}}$ homology by $f$.
Here we tackle the challenge of finding a non-trivial context which guarantees the functoriality of $A_\infty$-persistence. In this section, 
we introduce the category $\Top_n$ (Def. \ref{Def:Top_m}) and show that $A_\infty$ persistent homology is functorial within this category (Thm. \ref{thm:Functoriality}).
Additionally, we illustrate that this is the
largest category of its form (in a sense made explicit in Rmk. \ref{rmk:Top_n_is_the_largest_4_functoriality}) for which such functoriality should be expected.
From Thm. \ref{thm:Functoriality}, stability results for $A_\infty$ persistent homology (Cor.~\ref{cor:Stability_A_infty}, \ref{cor:stability_Hausdorff} and \ref{cor:ainfty-inference}) will follow.

\begin{definition}
\label{Def:Top_m}
Let $n\in\mathbb{Z}\cup\{+\infty\}$, $n>1$, and let $\Top_n$ denote the category whose objects are topological spaces $X$ such that $\Delta_m = 0$, for all $m < n $,
where $\{\Delta_m\}_{m\geq1}$ denotes any transferred
$A_\infty$-coalgebra structure on $  H_*(X)$,
and where the morphisms are continuous maps.
\end{definition}

It follows from Def. \ref{Def:Top_m} that for every integer $n>1$, $\Top_\infty \subseteq \Top_{n+1} \subseteq \Top_n \subseteq \Top_2 = \Top$ are full subcategories. The higher $n\in\mathbb{Z}\cup\{+\infty\}$ is, the closer the objects in $\Top_n$ are to having the whole $A_\infty$-coalgebra structure on their homology fully determined by their cohomology ring (a notion related to formality in the context of rational homotopy theory).

Given a topological space $X$,
all the transferred $A_\infty$-coalgebra structures $\{\Delta_m\}_m$ on $  H_*(X)$ are isomorphic. 
The Axiom of Choice guarantees that we can fix a choice of transferred $A_\infty$-coalgebra structure $\{\Delta_m\}_m$ on $  H_*(X)$, for every $X \in \Top$. 
For a concrete example in a restricted scenario, let us consider filtered CW complexes, which are CW complexes $\langle c_0, \dots, c_m \rangle$ whose cells are ordered $c_0, \dots, c_m$ so that, for all $0 \leq i < m$, $\langle c_0, \ldots, c_i \rangle$ forms a subcomplex of $\langle c_0, \ldots, c_{i+1} \rangle$.
H. Molina-Abril and P. Real \cite[Alg.~1]{Molina_Abril-Real09} used discrete vector fields to create a deterministic algorithm which computes a transferred $A_\infty$-coalgebra structure on $H_*(X)$ for every filtered CW complex $X$.
Once a method to build $A_\infty$-coalgebra structures has been fixed, we can define $\kappa_{n,p}$:

\begin{definition}
\label{Def:Functor_n_p}
Let $\kappa_{n,p}: \Top_n \longrightarrow \Vect$ be the following assignment:
for every object $X$ in $\Top_n$, pick a particular transferred $A_\infty$-coalgebra structure $\{\Delta_m\}_m$ on $  H_*(X)$ and define
$$\kappa_{n,p}(X) \coloneqq \Ker {\Delta_n}_{|  H_p(X)} \subseteq   H_p(X). $$
For every morphism $f:X \longrightarrow Y$ in $\Top_n$, define
$$ \kappa_{n,p}(f): \kappa_{n,p}(X) \longrightarrow \kappa_{n,p}(Y)$$
as the map 
$$  H_p(f):   H_p(X) \longrightarrow   H_p(Y)$$ induced by $f$ in degree-$p$  homology, restricted to $\kappa_{n,p}(X)$. Being pedantic with the notation, this would be
$$\kappa_{n,p}(f) = {  H_p(f)}_{| \Ker {\Delta_n}_{|  H_p(X)}}. $$
\end{definition}

\begin{theorem}
\label{thm:Functoriality}
\emph{\textbf{(Functoriality of $\bm{\kappa_{n,p}}$)}}
For each pair of integers $n\geq 2$, $p \geq 0$,
the category $\Top_n$ is well defined,
the assignment $$\kappa_{n,p}: \Top_n \longrightarrow \Vect$$
in Def. \ref{Def:Functor_n_p} defines a functor and
the integer 
$\dim \kappa_{n,p}(X)$
does not depend on the choice of $A_\infty$-coalgebra made in Def. \ref{Def:Functor_n_p}.
\end{theorem}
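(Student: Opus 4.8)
The statement bundles three assertions, and I would dispatch them in the order (i) well-definedness of $\Top_n$, (iii) choice-independence of $\dim\kappa_{n,p}(X)$, and (ii) functoriality, since the first two are immediate consequences of the invariance Theorem~\ref{COR:min-invariant} while the genuine content is (ii). For (i), observe that because $\Delta_1=0$ for any minimal structure, the defining condition ``$\Delta_m=0$ for all $m<n$'' is literally the condition $k(X)\geq n$, where $k(X)=\min\{m\mid\Delta_m\neq0\}$. Theorem~\ref{COR:min-invariant} says $k(X)$ is independent of the chosen transferred structure (and is a homotopy invariant), so membership in $\Top_n$ does not depend on the choice; hence $\Top_n$ is well defined. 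For (iii), $X\in\Top_n$ forces $n\leq k(X)$, so Theorem~\ref{COR:min-invariant} applies with $m=n$ and gives that $\dim\Ker \Delta_n|_{H_p(X)}=\dim\kappa_{n,p}(X)$ is independent of the choice made in Def.~\ref{Def:Functor_n_p}.

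The heart is (ii). The functor axioms $\kappa_{n,p}(\id)=\id$ and $\kappa_{n,p}(g f)=\kappa_{n,p}(g)\kappa_{n,p}(f)$ are inherited by restriction from the homology functor $H_p$, so the only thing to prove is that $\kappa_{n,p}(f)$ has the claimed target, i.e. that $H_p(f)$ carries $\Ker \Delta_n^X|_{H_p(X)}$ into $\Ker \Delta_n^Y|_{H_p(Y)}$. This is exactly the inclusion (\ref{eq:fKer_inKer}) that fails for general spaces, so the argument must use the hypothesis $X,Y\in\Top_n$. The plan is to manufacture an honest morphism of $A_\infty$-coalgebras $F:(H_*(X),\{\Delta_m^X\})\to(H_*(Y),\{\Delta_m^Y\})$ whose linear component $F_{(1)}$ is exactly the homology map $H_*(f)$, from three ingredients: the Alexander--Whitney diagonal is natural with respect to $C_*(f)$, so $C_*(f)$ is a morphism of differential graded coalgebras (hence an $A_\infty$-morphism concentrated in linear degree); the Homotopy Transfer Theorem (Thm.~\ref{thm:HTT}) supplies $A_\infty$-morphisms $I^X\colon H_*(X)\to C_*(X)$ and $\Pi^Y\colon C_*(Y)\to H_*(Y)$ with linear parts $\iota^X$ and $\pi^Y$; and the composite $\Pi^Y\circ C_*(f)\circ I^X$ is then an $A_\infty$-morphism with linear part $\pi^Y C_*(f)\iota^X=H_*(f)$.

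Given such an $F$, I would read off the morphism identity $\text{MI}(n)$. Since $X,Y\in\Top_n$ kill $\Delta_q^Y$ and $\Delta_\ell^X$ for every index strictly below $n$ (and $\Delta_1=0$ by minimality), every summand of $\text{MI}(n)$ vanishes except the two extreme ones: on the left only $(q,p,k)=(n,0,0)$ survives, contributing $\Delta_n^Y F_{(1)}$, and on the right only $\ell=n$ with all $k_j=1$ survives, contributing $F_{(1)}^{\otimes n}\Delta_n^X$. Thus $\Delta_n^Y\circ H_*(f)=(H_*(f))^{\otimes n}\circ\Delta_n^X$, and restricting to degree $p$, any $x$ with $\Delta_n^X(x)=0$ satisfies $\Delta_n^Y(H_p(f)(x))=(H_p(f))^{\otimes n}(\Delta_n^X(x))=0$, which is precisely the required inclusion.

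The main obstacle is the choice-dependence hidden in the construction of $F$: the transfer data underlying $\Pi^Y$ and $I^X$ produce their own transferred structures on $H_*(Y)$ and $H_*(X)$, which need not coincide with the structures $\{\Delta_m^Y\},\{\Delta_m^X\}$ fixed in Def.~\ref{Def:Functor_n_p}. To land $F$ strictly between the \emph{fixed} structures while keeping $F_{(1)}=H_*(f)$, I would invoke that any two transferred structures on $H_*(X)$ are $A_\infty$-isomorphic and pre-/post-compose by such isomorphisms; the delicate point is that these comparison isomorphisms can be taken to induce the identity on homology (the canonical identification of a minimal model's underlying space with $H_*(X)$), so that conjugating by them leaves the linear part equal to $H_*(f)$. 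Establishing this rigidity of the linear part is the crux; once it is in hand, the collapse of $\text{MI}(n)$ and the verification of the functor axioms are routine.
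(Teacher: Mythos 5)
Your proposal is correct and takes essentially the same route as the paper's proof: parts (i) and (iii) are dispatched by Thm.~\ref{COR:min-invariant} exactly as you do, and for (ii) the paper likewise converts $C_*(f)$ into an $A_\infty$-coalgebra morphism $F$ with $F_{(1)}=H_*(f)$ via Thm.~\ref{thm:HTT} (your composite $\Pi^Y\circ C_*(f)\circ I^X$ is the precise form of what the paper merely asserts), then collapses $\text{MI}(n)$ to $\Delta_n^Y\, H_*(f)=(H_*(f))^{\otimes n}\Delta_n^X$ using $X,Y\in\Top_n$, which gives the inclusion (\ref{eq:fKer_inKer}) and hence functoriality by restriction of $H_p$.

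One comparison point is worth making about the step you call the crux. The paper does not address it at all: it simply states that the HTT output is a morphism between the \emph{chosen} structures with linear part $H_*(f)$, ignoring both that the transfer produces its own structures and that the identifications must be canonical. So your flag is a genuine improvement in care. However, your proposed resolution needs a caveat: under the literal Def.~\ref{DEF:good-A_coalg} (minimal and abstractly quasi-isomorphic to the chain coalgebra), comparison isomorphisms with identity linear part need not exist. Indeed, if $\{\Delta_m\}_m$ is transferred and $\theta$ is any degree-$0$ automorphism of $H_*(X)$, then $\{\theta^{\otimes m}\circ\Delta_m\circ\theta^{-1}\}_m$ is again transferred (it is strictly isomorphic to $\{\Delta_m\}_m$ via $\theta$), and an $A_\infty$-isomorphism between the two with identity linear part would force $\theta^{\otimes n}\Delta_n=\Delta_n\theta$ by the very collapse of $\text{MI}(n)$ you use --- impossible for general $\theta$ when $\Delta_n\neq 0$ (take, e.g., $\theta$ a generic scalar multiple of the identity). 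What rescues both your argument and the paper's is the standard normalization, implicit throughout the paper and satisfied by concrete constructions such as the Molina-Abril--Real algorithm: one restricts to transferred structures arising from transfer data $(\pi,\iota,\phi)$ as in Thm.~\ref{thm:HTT} in which $\iota$ and $\pi$ induce the canonical identification of $H(C_*(X))$ with $H_*(X)$. With that convention your comparison isomorphisms do have identity linear part, and every remaining step of your outline coincides with the paper's proof.
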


\begin{proof}
By definition, the objects in $\Top_n$ are the topological spaces $X$ such that 
$$ \min \{ m \,|\, \Delta_m \neq 0 \} \geq n, $$
where $\{\Delta_m\}_m$ denotes any transferred
$A_\infty$-coalgebra structure on $H_*(X)$.
Hence, Thm.~\ref{COR:min-invariant} guarantees that $\Top_n$ is well defined for all $n\in\mathbb{Z}\cup\{+\infty\}$ such that $n>1$.
In particular, the property of $X$ being in $\Top_n$ does not depend on the choice of $A_\infty$-coalgebra on its homology. Rather, it only depends on the homotopy type of $X$.
Thm.~\ref{COR:min-invariant} also guarantees that if $X\in\Top_n$ for some integer $n>1$, then the integer 
$\dim \kappa_{n,p}(X) = \dim \Ker {\Delta_n}_{|H_p(X)}$
does not depend on the choice of $A_\infty$-coalgebra and it is indeed a homotopy invariant of $X$, for all $p \geq 0$.

Shifting the focus now to $\kappa_{n,p}$,
the Axiom of Choice makes the assignment $ X \mapsto \kappa_{n,p}(X)$ well defined. 
To prove that the assignment 
$$ f \mapsto \left( \kappa_{n,p}(f) \colon \kappa_{n,p}(X) \rightarrow \kappa_{n,p}(Y) \right)$$
is well defined too, is equivalent to showing that
$$ \Image \kappa_{n,p}(f) \subseteq \kappa_{n,p}(Y)$$
holds. For this,
it suffices to show that the inclusion in (\ref{eq:fKer_inKer}),
$
  H_p(f)(\Ker {\Delta_n^X}_{|    H_p(X)}) \subseteq \Ker {\Delta_n^Y}_{|    H_p(Y)},$
  holds if $f: X \longrightarrow Y$ denotes a morphism in $\Top_n$
and $\{\Delta_m^X\}_m$ and $\{\Delta_m^Y\}_m$ denote any transferred $A_\infty$-coalgebra structures on $  H_*(X)$ and $  H_*(Y)$, respectively.
To show that (\ref{eq:fKer_inKer}) holds, notice that any continuous function $f: X \longrightarrow Y$ induces maps of differential graded coalgebras at the singular chain level $  C_*(f): C_*(X) \longrightarrow C_*(Y)$. Applying Thm. \ref{thm:HTT} once with $M\cong C_*(X)$ and once with $M\cong C_*(Y)$, we can infer that $C_*(f)$, in turn, induces a morphism of
$A_\infty$-coalgebras
$$ \{F_{(m)}\}_m : (  H_*(X), \{\Delta_n^X\}_n) \longrightarrow (  H_*(Y), \{\Delta_n^Y\}_n)$$
where $F_{(1)} =   H_*(f)$.

The key point here is that if $X, Y  \in \Top_n$, then the identity MI$(n)$
in Def. \ref{DEF:Morph-A_coalg}
becomes
$ \De^Y_n   H_*(f) =
{  H_*(f)}^{\otimes n} \De_n^X, $
and thus, the inclusion in (\ref{eq:fKer_inKer}) does hold.

Now that we have checked that 
$\kappa_{n,p}(f) \colon \kappa_{n,p}(X) \rightarrow \kappa_{n,p}(Y)$ is well defined, notice that $\kappa_{n,p}(f)$ is the restriction of the map $H_p(f)$, and therefore, the functoriality of $\kappa_{n,p}$ follows from that of the homology functor $H_p: \Top \longrightarrow \Vect$.
\end{proof}

\begin{remark}
\label{rmk:Top_n_is_the_largest_4_functoriality}
In general, one cannot expect to find a larger category in the family $\{\Top_m\}_{m\in \mathbb Z \cup \{\infty\}, m>1}$ for which 
$$\kappa_{n,p}: \Top_m \longrightarrow \Vect$$ would be functorial, as explained next.

Consider we try to redefine 
$\kappa_{n,p}$ as in Def. \ref{Def:Functor_n_p} but on a general $Top_m$,
$$\kappa_{n,p}: \Top_m \longrightarrow \Vect.$$
Thm. \ref{thm:Functoriality} shows that
\begin{itemize}
    \item [(a)] $\kappa_{n,p}: \Top_m \longrightarrow \Vect$ defines a functor, and
    \item [(b)] $\dim \kappa_{n,p}(X)$
does not depend on the choice of $A_\infty$-coalgebra, for any $X \in \Top_m$,
\end{itemize}
for any values $m\geq n \geq2$,
but it does not guarantee these two properties to hold if $2\leq m < n$.
Actually, we should not expect to have such properties to hold for $m<n$ in general, as counterexamples such as Ex. \ref{ex:Top_n_is_the_largest_4_functoriality} show.
\end{remark}

In this example, we use adaptations of the definitions of $\Top_m$ and $\kappa_{n,p}$ so that all the $A_\infty$-coalgebras considered in their definitions are
transferred $A_\infty$-coalgebras on reduced rational homology.
With these reduced versions of $\Top_m$ and $\kappa_{n,p}$,
we will now recall an example for which 
$\dim \kappa_{3,7}(X)$
does depend on the choice of $A_\infty$-coalgebra if $X\in\Top_2-\Top_3$, failing to satisfy property (b),
and we will extend this example to exhibit a case in which 
$$\kappa_{3,7}: \Top_2 \longrightarrow \Vect$$
does not define a functor.

\begin{example}
\label{ex:Top_n_is_the_largest_4_functoriality}
Let us denote a wedge of a complex projective plane and a 7-sphere as $X$, and let us omit the rational coefficients from the notation to simply keep
$\widetilde{H}_*(X) \coloneqq \widetilde{H}_*(\mathbb C P^2 \vee \mathbb S^7; \mathbb Q)$.
Ex. 1 in \cite{Belchi17} presents two transferred $A_\infty$-coalgebra structures $\{\Delta^V_n\}_n$ and $\{\Delta^W_n\}_n$ on $\widetilde{H}_*(X)$. 
From the computations in \cite[Ex. 1]{Belchi17},
it is clear that $X \in \Top_2-\Top_3$. An intuitive reason why this happens is that the cup product of the complex projective plane is non-trivial.
The $A_\infty$-structure $\{\Delta^V_n\}_n$ satisfies
$$ \dim \Ker {\Delta^V_3}_{|\widetilde{H}_7(X)} = \dim \widetilde{H}_7(X) = 1,$$
whereas $\{\Delta^W_n\}_n$ satisfies
$$ \dim \Ker {\Delta^W_3}_{|\widetilde{H}_7(X)} = 0.$$
This directly shows an example for which 
$\dim \kappa_{3,7}(X)$
does depend on the choice of $A_\infty$-coalgebra if $X\in\Top_2-\Top_3$, failing to satisfy property (b).

Let us now attach a cell to $X$ by setting $Y=X\vee\mathbb S^1.$
Notice that the homology of $X$ and $Y$ only differ in degree 1, where we have
$\widetilde{H}_1(X)=0$ and $\widetilde{H}_1(Y)\cong\mathbb Q$.
With the techniques from \cite[III.3.(6)]{Tanre83}, it is easy to see that we can extend the 
$A_\infty$-coalgebra $\left(\widetilde{H}_*(X), \{\Delta^W_n\}_n\right)$
to a transferred
$A_\infty$-coalgebra $\left(\widetilde{H}_*(Y), \{\Delta_n\}_n\right)$,
by simply setting
${\Delta_n}_{|\widetilde{H}_p(Y)} \equiv {\Delta^W_n}_{|\widetilde{H}_p(X)}$ for $p\neq 1$ and
${\Delta_n}_{|\widetilde{H}_1(Y)} \equiv0.$
In particular, 
$$ \dim \Ker {\Delta_3}_{|\widetilde{H}_7(Y)} = \dim \Ker {\Delta^W_3}_{|\widetilde{H}_7(X)} = 0.$$
This leads us to the following situation.
Let us denote by 
$$\iota \coloneqq X \longrightarrow Y$$
the inclusion map of $X$ within $Y$.
If $\alpha$ denotes the generator of $\widetilde{H}_7(X) \cong \mathbb Q,$
then its image by the map induced in homology,
$\widetilde{H}_7(\iota)(\alpha)$, generates $\widetilde{H}_7(Y) \cong \mathbb Q.$
In particular, $0\neq \widetilde{H}_7(\iota)(\alpha) \notin \Ker {\Delta_3}_{|\widetilde{H}_7(Y)},$ and therefore,
$$\widetilde{H}_7(\iota)\left(\Ker {\Delta_3}_{|\widetilde{H}_7(X)}\right) \not\subset \Ker {\Delta_3}_{|\widetilde{H}_7(Y)}.$$
This means we cannot define $\kappa_{3,7}$ on functions of $Top_2-Top_3$.
In particular, $$\kappa_{3,7}: \Top_2 \longrightarrow \Vect$$
does not define a functor.
\end{example}

From this moment on, we assume we have fixed a choice of a functor
$\kappa_{n,p}: \Top_n \longrightarrow \Vect$ as in Def. \ref{Def:Functor_n_p}.
We will next see how the existence of barcodes in $A_\infty$ persistent homology and their stability follow from the functoriality of $\kappa_{n,p}$.

\begin{corollary}
\textbf{(Structure theorem of $\bm{A_\infty}$ persistent homology in $\Top_n$)}
\label{cor:Barcode_Ainfty}
Fix integers $p\geq 0$ and $n \geq 1$. Let ${X}_* \colon \mathbb R \longrightarrow \Top_n$ be a persistence space such that $\dim_\mathbb{F} H_p(X_t) < \infty$ for all $t \in \mathbb R$.
Then the $A_\infty$ persistent homology module $\kappa_{n, p} {X}_*$ 
decomposes uniquely (up to isomorphism)  
into interval persistence modules $C(I)$,
$$\kappa_{n, p} {X}_*\cong\bigoplus_{I\in B\left(\kappa_{n, p} {X}_* \right)}\Cfunc(I),$$
where $B\left(\kappa_{n, p} {X}_* \right)$ is a multiset of intervals of the form $[a,b)$ for some $a\in \mathbb R$, $b\in (a, +\infty] \subseteq \mathbb{R}\cup\{+\infty\}$. We call $B\left(\kappa_{n, p} {X}_* \right)$ the  \textbf{${\bm\Delta_{n,p}}$-barcode} of ${X}_*$.
\end{corollary}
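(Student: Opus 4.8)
The plan is to recognize $\kappa_{n,p}{X}_*$ as a pointwise finite-dimensional persistence module and then invoke the structure theorem (Thm.~\ref{thm:Barcode_decomposition}). First I would dispose of the degenerate value $n=1$: since a transferred (hence minimal) $A_\infty$-coalgebra has $\Delta_1=0$, the condition "$\Delta_m=0$ for all $m<1$" is vacuous, so $\Top_1=\Top$ and $\kappa_{1,p}=\Ker {\Delta_1}_{|H_p}=H_p$; thus $\kappa_{1,p}{X}_*$ is just the classical persistent homology module and the claim is the sublevel-set instance of Thm.~\ref{thm:Barcode_decomposition}. For $n\geq 2$ the essential input is the functoriality already established in Thm.~\ref{thm:Functoriality}: $\kappa_{n,p}\colon\Top_n\to\Vect$ is a functor, so post-composing it with the persistence space ${X}_*\colon\bR\to\Top_n$ yields a genuine functor $\kappa_{n,p}{X}_*\colon\bR\to\Vect$, i.e. an object of $\Vect^{\bR}$ to which the abstract theory applies.

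Next I would verify pointwise finite-dimensionality. By Def.~\ref{Def:Functor_n_p}, for each $t\in\bR$ the space $\kappa_{n,p}(X_t)=\Ker {\Delta_n}_{|H_p(X_t)}$ is a linear subspace of $H_p(X_t)$, so $\dim_\mathbb{F}\kappa_{n,p}(X_t)\leq\dim_\mathbb{F}H_p(X_t)<\infty$ by hypothesis. Hence $\kappa_{n,p}{X}_*$ is p.f.d., and Thm.~\ref{thm:Barcode_decomposition} applies verbatim, delivering the asserted unique (up to isomorphism) decomposition $\kappa_{n,p}{X}_*\cong\bigoplus_{I}\Cfunc(I)$ into interval modules, with uniqueness inherited directly from that theorem.

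The one point that is not purely formal is the shape of the intervals, and I expect this to be essentially the only real work. Thm.~\ref{thm:Barcode_decomposition} already forces each $I$ to be of the form $[a,b)$ or $(-\infty,b)$, so what remains is to exclude the left-infinite bars, i.e. to show $\kappa_{n,p}(X_t)=0$ for all sufficiently negative $t$. The hard part is that this does \emph{not} follow from finite-dimensionality alone: for a constant nonzero functor $\bR\to\Top_n$ the bar $(-\infty,+\infty)$ genuinely occurs. The left-closedness must instead come from the sublevel-set origin of ${X}_*$, where $X_t=\emptyset$ once $t$ is small enough, forcing $H_p(X_t)=0$ and a fortiori $\kappa_{n,p}(X_t)=\Ker {\Delta_n}_{|H_p(X_t)}=0$. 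I would therefore flag this emptiness-at-$-\infty$ hypothesis explicitly; granting it, no interval extends to $-\infty$, every bar has the form $[a,b)$ with finite left endpoint $a\in\bR$ and $b\in(a,+\infty]$, and the proof is complete.
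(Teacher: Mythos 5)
Your core argument is the paper's argument: $\kappa_{n,p}(X_t)$ is a linear subspace of $H_p(X_t)$, hence $\kappa_{n,p}X_*$ is p.f.d., hence Thm.~\ref{thm:Barcode_decomposition} applies; the paper's proof consists of exactly these two steps and nothing else.

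The additional care in your proposal is warranted, and in one place it catches a genuine overstatement. The paper's proof never rules out intervals of the form $(-\infty,b)$, which Thm.~\ref{thm:Barcode_decomposition} explicitly permits, yet the corollary asserts that every bar is $[a,b)$ with finite $a\in\mathbb R$. As you observe, this cannot follow from the stated hypotheses: take the constant persistence space $X_t=\mathbb S^2$ for all $t$, with identity structure maps (working with reduced homology, which the paper allows). All transferred coproducts $\Delta_m$ vanish on $\widetilde{H}_*(\mathbb S^2)$ for degree reasons, so this space lies in $\Top_n$ for every $n$, is p.f.d., and $\kappa_{n,2}X_*$ is the constant module $\mathbb F$, whose barcode is the single left-infinite bar $(-\infty,+\infty)$. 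So the finite-left-endpoint claim needs precisely the kind of hypothesis you flag --- e.g. $\kappa_{n,p}(X_t)=0$ for all sufficiently negative $t$, which holds for sublevel-set filtrations of functions bounded below, but not for arbitrary persistence spaces valued in $\Top_n$. Your disposal of the case $n=1$ also fills a small gap the paper skips over: the corollary is stated for $n\geq1$, but $\Top_n$ and $\kappa_{n,p}$ are only defined for $n\geq2$; your reading (minimality forces $\Delta_1=0$, so $\kappa_{1,p}=H_p$ and the statement degenerates to classical persistent homology) is the sensible interpretation.
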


\begin{proof}
Since
$\kappa_{n, p}(X_t)$ is a vector subspace of $H_p(X_t)$,
we have
$$\dim_\mathbb{F} \kappa_{n, p}(X_t) \leq \dim_\mathbb{F} H_p(X_t) < \infty$$
for all $t \in \mathbb R$, by assumption.
Hence, $\kappa_{n, p} {X}_*$ forms a p.f.d. persistence module and Thm.~\ref{thm:Barcode_decomposition} guarantees that we can uniquely decompose it as a direct sum of interval persistence modules.
\end{proof}

The barcode decomposition result in Cor. \ref{cor:Barcode_Ainfty} deals with persistence spaces of the form ${X}_* \colon \mathbb R \longrightarrow \Top_n$.
An analogous theorem was proved in \cite{Belchi17}
for persistence spaces
${X}_* \colon \mathbb Z \longrightarrow \mathcal C$
indexed by the integers and valued in a category $\mathcal C$ potentially larger than $\Top_n$ ($\Top_n \subset \mathcal C \subset \Top$).
When the considered persistence spaces have the form ${X}_* \colon \mathbb Z \longrightarrow \Top$,
the inclusion in (\ref{eq:fKer_inKer}) fails. This stops
the analogue of $\kappa_{n,p}({X}_*)$ from forming a persistence module, and one needs to resort to zigzag modules. The structure and interpretation of the corresponding $A_\infty$ persistence zigzag modules was studied in detail in \cite{Belchi-Murillo15}.

To state and prove the rest of the results in this paper, we will fix the following notation and assumptions:

\begin{assumptions*}
All the sublevel sets 
of a real-valued functions $f \colon X \longrightarrow \bR$, 
have finite-dimensional $p^\text{th}$ homology, \emph{i.e.,} $\dim_{\mathbb F} H_p\left( f^{-1} [-\infty, t)\right) < \infty$.
\end{assumptions*}

\begin{notation*}
\begin{itemize}
    \item $d_B$ denotes the bottleneck distance (Def. \ref{def:bottleneck_distance}).
    \item $d_\infty$ denotes the $l^\infty$ distance in (\ref{eq:l-infinity_distance}).
    \item $|| \cdot||_\infty$ denotes the supremum norm.
    \item $B_{n, p}(f) \coloneqq B\left(\kappa_{n, p}\SS(f)\right)$ denotes the $\Delta_{n,p}$-barcode of the sublevel-set filtration of a continuous real-valued function $f \colon X \longrightarrow \mathbb{R}$.
    \item $d^X:M \longrightarrow \mathbb{R}$ denotes the real-valued function defined by $d^X(y) = d(y,X)$, for all $y\in M$, given any closed subspace $X$ of a metric space $(M, d)$.
    \item $X^{+\delta}$ denotes the (open) \textbf{$\delta$-thickening} of $X$, $\left(d^X\right)^{-1}\left(-\infty, \delta\right),$ for $\delta>0.$
    \item $X^{+\delta]}$ denotes the (closed) \textbf{$\delta$-thickening} of $X$, $\left(d^X\right)^{-1}\left(-\infty, \delta\right],$ for $\delta\geq0.$
    \item $B_{n,p}(X)$ denotes the barcode $B_{n, p}(d^X)$.
\end{itemize}
\end{notation*}

As we will mention after Cor.~\ref{cor:stability_Hausdorff}, $B_{n, p}(X)$ can be interpreted as the $\Delta_{n,p}$-barcode of the \Cech ech-complex filtration of $X$.

Let us start with a toy example in which two persistent spaces result in the same barcode within classical persistent homology but result in barcodes at an infinite bottleneck distance away from each other within $A_\infty$ persistent homology. This example illustrates the superior discriminatory power of $A_\infty$ persistent homology and helps understanding what the barcodes in $A_\infty$-persistence encode.

\begin{example}
\label{ex:barcodes_in_PH_and_Ainfty}
Let us work with reduced homology. 
We start by defining the category $\widetilde{\Top}$ as the analogue of $\Top_n$ (Def. \ref{Def:Top_m}) in this setting: for $n\in\mathbb{Z}\cup\{+\infty\}$, $n>1$, let $\widetilde{\Top}_n$ denote the category whose objects are topological spaces $X$ such that $\Delta_m = 0$, for all $m < n $,
where $\{\Delta_m\}_{m\geq1}$ denotes any transferred
$A_\infty$-coalgebra structure on the reduced homology $\widetilde{H}_*(X;\mathbb F)$,
and where the morphisms are continuous maps.

Let $X_*, Y_*$ be two persistence spaces consisting of thickeniing filtrations of two point clouds $P$ and $Q$, in the sense that
$$
X_t \coloneqq P^{+t]}
\hspace{5mm} \text{and} \hspace{5mm} 
Y_t \coloneqq Q^{+t]}, \hspace{5mm}\text{for all } t\in\mathbb R.$$
Assume that $P$ and $Q$ are sampled from a torus $\mathbb T$ and a wedge of spheres $\mathbb S^2 \vee \mathbb S^1 \vee \mathbb S^1$, respectively, in such a way that the non-trivial degree 2 homology of the persistence spaces appears at time $t_0$ and vanishes at time $t_1$ in both cases, 
\emph{i.e.,}
$$H_2(X_t)=H_2(Y_t)=0,  \hspace{5mm}\text{for all } t\in (-\infty, t_0)\cup [t_1, +\infty)$$
and so that the homotopy types of the steps in-between are known:
$$
X_t \simeq \mathbb T
\hspace{5mm} \text{and} \hspace{5mm} 
Y_t \simeq \mathbb S^2 \vee \mathbb S^1 \vee \mathbb S^1, \hspace{5mm}\text{for all } t\in\mathbb [t_0, t_1).$$
where all connecting maps are homotopic to the identity.
Denote by $B_p(X_*)$ and $B_p(Y_*)$ the barcodes describing the evolution of $\widetilde{H}_p(X_t)$ and $\widetilde{H}_p(Y_t)$, respectively.
Recall that
$$
\widetilde{H}_p(\mathbb T)
\cong
\widetilde{H}_p(\mathbb S^2 \vee \mathbb S^1 \vee \mathbb S^1)
\cong
\begin{cases}
\begin{array}{lcl}
\mathbb F^2, & \quad & p = 1\\
\mathbb F, & \quad & p = 2\\
0, & \quad & p \neq 1,2.
\end{array}
\end{cases}
$$
In particular, both $B_2(X_*)$ and $B_2(Y_*)$ consist of a single interval $[t_0, t_1)$. Hence,
$$ d_B \left(B_2(X_*), B_2(Y_*)\right)=0,$$
and classical persistence in degree 2 does not tell the two point clouds apart.

Now let us also denote by $\alpha^*$ and $\beta^*$ the two generators of 
$\widetilde{H}^1(\mathbb T)$ and let $\gamma^*$ be the generator of $\widetilde{H}^2(\mathbb T)$.
It is well known that we can choose these cohomology generators so that the cup product relates them via the equality
$\alpha^* \smile \beta^* = \gamma^*.$
Dually, if we denote by $\alpha$ and $\beta$ the two generators of 
$\widetilde{H}_1(\mathbb T)$ and let $\gamma$ be the generator of $\widetilde{H}_2(\mathbb T)$, then 
$$\Delta \gamma = \alpha \otimes \beta.$$
Hence, any transferred $A_\infty$-coalgebra 
$\left(\widetilde{H}_*(\mathbb T), \{\Delta_n\}_n \right)$
will have
\begin{equation}
\label{eq:Delta2_no_0}
    \Delta_2(\gamma)\neq0,
\end{equation}
and thus
$$X_t \in \widetilde{Top}_2\setminus\widetilde{Top}_3,$$
for all $t\in [t_0, t_1).$ Therefore, as long as we only compare $\Delta_{n,p}$-barcodes of $X_*$ for $n\leq2$, we can guarantee that stability results such as Cor. \ref{cor:Stability_A_infty} hold.
Denote by $B_{2,2}(X_*)$ and $B_{2,2}(Y_*)$ the $\Delta_{2,2}$-barcode describing the evolution of $\Ker {\De_2}_{| \widetilde{H}_2(X_t)}$ and $\Ker {\De_2}_{| \widetilde{H}_2(Y_t)}$, respectively.
\noindent Eq. (\ref{eq:Delta2_no_0}) shows that
$$\Ker {\De_2}_{| \widetilde{H}_2(X_t)} = 0 \subsetneq \widetilde{H}_2(X_t),$$
for all $t\in [t_0, t_1).$
Hence, $B_{2,2}(X_*)$ is empty - it consists of no intervals.

On the other hand, using the same reasoning on $\mathbb S^2 \vee \mathbb S^1 \vee \mathbb S^1$, any transferred $A_\infty$-coalgebra 
$\left(\widetilde{H}_*(\mathbb S^2 \vee \mathbb S^1 \vee \mathbb S^1), \{\Delta_n\}_n \right)$
will have $\Delta_2 = 0,$
which is, 
$$\Ker {\De_2}_{| \widetilde{H}_2(Y_t)} = \widetilde{H}_2(Y_t),$$
for all $t\in [t_0, t_1).$
Hence, $B_{2,2}(Y_*)=B_2(Y_*)$ still consists of a single interval 
$[t_0, t_1)$ and we conclude that
$$ d_B \left(B_{2,2}(X_*), B_{2,2}(Y_*)\right)=\frac{t_1-t_0}{2}.$$
The more the degree 2 homology persist in the filtrations (\emph{i.e.,} the greater the difference $t_1-t_0$ is), the greater the bottleneck distance between the $A_\infty$-persistence barcodes $B_{2,2}(X_*)$ and $B_{2,2}(Y_*)$ is too.
\end{example}

We now prove the first result on the stability of $A_\infty$ persistent homology by providing a generalization of Thm.~\ref{thm:stability_classical}.

\begin{corollary}
\textbf{(Stability of $\bm{A_\infty}$ persistent homology for functions)}
\label{cor:Stability_A_infty}
Let $n>0$ be an integer and let $f \colon X \longrightarrow \mathbb{R}$ and $g \colon Y \longrightarrow \mathbb R$ be two continuous maps. 
Assume that all sublevel-sets of $f$ and $g$ are in $\Top_n$.
Then, for all $p\geq 0$,
the bottleneck distance between the $\Delta_{n,p}$-barcodes of $f$ and $g$ is bounded above by the $l^\infty$ distance between the functions:
$$ d_B \left( B_{n, p}(f), B_{n, p}(g) \right) \leq d_\infty(f, g).$$
In particular, if $X = Y$, then
$$ d_B \left( B_{n, p}(f), B_{n, p}(g) \right) \leq || f - g ||_\infty.$$
\end{corollary}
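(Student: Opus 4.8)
The plan is to recognize $B_{n,p}(f)$ and $B_{n,p}(g)$ as the barcodes of the persistence modules $\kappa_{n,p}\SS(f)$ and $\kappa_{n,p}\SS(g)$, and then to bound their bottleneck distance by chaining the three metric results recalled in \S\ref{sec:persistenceAndFunctoriality} with the functoriality established in Thm.~\ref{thm:Functoriality}. The whole proof is essentially a diagram chase through a composite of non-expansive functors, closed off by the Isometry Theorem.

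First I would check that $\SS(f)$ and $\SS(g)$ corestrict to persistence spaces valued in $\Top_n$. By hypothesis every sublevel set $f^{-1}(-\infty,t]$ lies in $\Top_n$, and since $\Top_n \subseteq \Top$ is a \emph{full} subcategory, the inclusion maps between sublevel sets are automatically morphisms in $\Top_n$; hence $\SS(f), \SS(g) \colon \R \to \Top_n$ are well defined. Fullness also guarantees that an $\epsilon$-interleaving of $\SS(f)$ and $\SS(g)$ in $\Top^{\R}$, whose structure maps are continuous maps, is precisely an $\epsilon$-interleaving in $\Top_n^{\R}$, so the interleaving distance is unchanged by this corestriction. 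This bookkeeping is the one point that needs genuine care, since Thm.~\ref{thm:SublevelSet_1Lipschitz} is stated over $\Top$ while Thm.~\ref{thm:Functors_are_1Lipschitz_Real_case} will be applied over $\Top_n$.

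Next I would compose the Lipschitz estimates. By Thm.~\ref{thm:SublevelSet_1Lipschitz}, $d_I(\SS(f),\SS(g)) \leq d_\infty(f,g)$. By Thm.~\ref{thm:Functoriality}, $\kappa_{n,p} \colon \Top_n \to \Vect$ is a functor, so Thm.~\ref{thm:Functors_are_1Lipschitz_Real_case} applied to post-composition with $\kappa_{n,p}$ yields
$$d_I(\kappa_{n,p}\SS(f), \kappa_{n,p}\SS(g)) \leq d_I(\SS(f),\SS(g)) \leq d_\infty(f,g).$$
The standing Assumptions give $\dim_{\mathbb F} H_p(f^{-1}(-\infty,t]) < \infty$ for all $t$, and since $\kappa_{n,p}\SS(f)(t)$ is by definition a subspace of $H_p(f^{-1}(-\infty,t])$, the modules $\kappa_{n,p}\SS(f)$ and $\kappa_{n,p}\SS(g)$ are p.f.d.\ (exactly as in the proof of Cor.~\ref{cor:Barcode_Ainfty}). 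Hence the Isometry Theorem~\ref{thm:Isometry_theorem} converts the interleaving distance into the bottleneck distance,
$$d_B(B_{n,p}(f), B_{n,p}(g)) = d_I(\kappa_{n,p}\SS(f), \kappa_{n,p}\SS(g)) \leq d_\infty(f,g),$$
which is the first inequality.

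For the second inequality, when $X = Y$ I would simply take $\Phi = \id$ in Def.~\ref{def:d_infty_for_Rvalued_continuous_maps}, which gives $d_\infty(f,g) \leq || f - g ||_\infty$, and combine it with the first inequality. I do not anticipate any deep obstacle here: the result is a formal corollary of functoriality plus the isometry theorem, and the only genuine verifications are the identification of interleaving distances across the full inclusion $\Top_n \hookrightarrow \Top$ and the p.f.d.\ hypothesis needed to invoke Thm.~\ref{thm:Isometry_theorem}.
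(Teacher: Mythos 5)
Your proposal is correct and follows essentially the same route as the paper's proof: the identical chain of inequalities obtained from the Isometry Theorem~\ref{thm:Isometry_theorem}, 1-Lipschitzness of post-composition with $\kappa_{n,p}$ (Thm.~\ref{thm:Functors_are_1Lipschitz_Real_case} applied via Thm.~\ref{thm:Functoriality}), and 1-Lipschitzness of the sublevel-set filtration (Thm.~\ref{thm:SublevelSet_1Lipschitz}), with the p.f.d.\ hypothesis securing the barcodes and $\Phi = \id$ handling the case $X = Y$. Your explicit verification that fullness of $\Top_n \subseteq \Top$ makes the corestricted $\SS(f), \SS(g) \colon \bR \to \Top_n$ well defined and leaves the interleaving distance unchanged is a bookkeeping point the paper leaves implicit, but it does not alter the argument.
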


\begin{proof}
The assumption of each $f^{-1} [-\infty, t)$ being in $\Top_n$ turns the sublevel-set filtration of $f$ into a persistence space of the form 
$\SS(f) \colon \mathbb R \longrightarrow \Top_n$. Since $\kappa_{n,p}$ is a functor (Thm.~\ref{thm:Functoriality}) and $\dim_{\mathbb F} H_p\left( f^{-1} [-\infty, t)\right) < \infty$ for all $t$ by assumption,
the composition $\kappa_{n,p} \SS(f)$ is a p.f.d. persistence module.
Thm.~\ref{thm:Barcode_decomposition} guarantees the existence and uniqueness of the $\Delta_{n,p}$-barcode $B_{n, p}(f) \coloneqq B \left( \kappa_{n,p} \SS(f) \right)$. The same holds for $g$, and Thm.~\ref{thm:Isometry_theorem} then shows that
$$d_B \left( B\left(\kappa_{n, p}\SS(f)\right), B\left(\kappa_{n, p}\SS(g)\right) \right) \leq d_I \left( \kappa_{n, p}\SS(f), \kappa_{n, p}\SS(g) \right),$$
where $d_I$ denotes the interleaving distance.
We then use Thm.~\ref{thm:Functors_are_1Lipschitz_Real_case} to exploit the functoriality of $\kappa_{n,p}$ and conclude that
$$d_I \left( \kappa_{n, p}\SS(f), \kappa_{n, p}\SS(g) \right) \leq
d_I \left( \SS(f), \SS(g) \right).$$
Thm.~\ref{thm:SublevelSet_1Lipschitz} asserts that the sublevet-set functor is 1-Lipschitz. Hence:
$$d_I \left( \SS(f), \SS(g) \right) \leq
d_\infty(f, g).$$
These 3 inequalities together show that 
$$ d_B \left( B_{n, p}(f), B_{n, p}(g) \right) \leq d_\infty(f, g).$$
In particular, if $X = Y$, then, since the identity $\Phi \colon X \longrightarrow Y$ is a homeomorphism, 
$$ d_\infty(f, g) \leq || f - g ||_\infty,$$ and the second claim in Cor.~\ref{cor:Stability_A_infty} holds as well.
\end{proof}

\begin{definition}
\label{def:Hausdorff_distance}
The \textbf{Hausdorff distance} between two non-empty subsets $X, Y$ of a metric space $(M, d)$ is defined as
$$d_H(X,Y) = \inf\{\delta \geq 0\,;\ X \subseteq Y^{+\delta]} \ \mbox{and}\ Y \subseteq X^{+\delta]}\}.$$
\end{definition}

By definition of the Hausdorff distance $d_H$, we have that $||d^X - d^Y||_\infty = d_H(X,Y)$.
This leads to the following straightforward corollary of Cor.~\ref{cor:Stability_A_infty}:

\begin{corollary}
\textbf{(Stability of $\bm{A_\infty}$ persistent homology for metric spaces)}
\label{cor:stability_Hausdorff}
Let $n\geq 1$ be an integer.
Let $X$ and $Y$ be closed subspaces of a metric space $M$
such that the thickenings $X^{+\tau]}$ and $Y^{+\tau]}$ are in $\Top_n$ for all $\tau\geq 0$. 
Then, for all $p \geq 0$,
the bottleneck distance between the $\Delta_{n,p}$-barcodes $B_{n,p}(X)$ and $B_{n,p}(Y)$ is bounded above by the Hausdorff distance between the spaces $X$ and $Y$:
$$ d_B(B_{n,p}(X), B_{n,p}(Y)) \leq 
d_H(X,Y).$$
\end{corollary}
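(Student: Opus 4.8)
The plan is to recognize this as a direct specialization of the stability statement for functions, Cor.~\ref{cor:Stability_A_infty}, applied to the pair of distance functions $d^X, d^Y \colon M \longrightarrow \mathbb{R}$, which conveniently share the common domain $M$, together with the identity $\|d^X - d^Y\|_\infty = d_H(X,Y)$ recorded just above the statement.

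First I would unwind the notation: by definition $B_{n,p}(X) = B_{n,p}(d^X)$ and $B_{n,p}(Y) = B_{n,p}(d^Y)$, so it suffices to bound $d_B\!\left(B_{n,p}(d^X), B_{n,p}(d^Y)\right)$. Both $d^X$ and $d^Y$ are continuous (in fact $1$-Lipschitz) real-valued functions on the single space $M$, so Cor.~\ref{cor:Stability_A_infty} is the natural tool, provided its hypothesis --- that every sublevel set of $d^X$ and of $d^Y$ lies in $\Top_n$ --- is met. This is the one point that deserves care: for $t \geq 0$ the sublevel set $\left(d^X\right)^{-1}(-\infty, t] = X^{+t]}$ is a closed thickening, which is in $\Top_n$ by hypothesis (the case $t=0$ recovering $X$ itself, since $X$ is closed), while for $t < 0$ the sublevel set is empty because $d^X \geq 0$, and $\emptyset \in \Top_n$ trivially, since $H_*(\emptyset) = 0$ forces every transferred $A_\infty$-coalgebra operation $\Delta_m$ to vanish. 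The identical argument handles $d^Y$. Together with the standing finite-dimensionality Assumption, this exhibits $\SS(d^X)$ and $\SS(d^Y)$ as persistence spaces valued in $\Top_n$ to which Cor.~\ref{cor:Stability_A_infty} genuinely applies.

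Finally, because $d^X$ and $d^Y$ are defined on one and the same domain $M$, I would invoke the \emph{in particular} clause of Cor.~\ref{cor:Stability_A_infty} (realized by the homeomorphism $\Phi = \mathrm{id}_M$) to obtain $d_B\!\left(B_{n,p}(d^X), B_{n,p}(d^Y)\right) \leq \|d^X - d^Y\|_\infty$, and then substitute $\|d^X - d^Y\|_\infty = d_H(X,Y)$ to conclude. Since essentially all of the analytic content is carried by Cor.~\ref{cor:Stability_A_infty}, I expect no serious obstacle here; the only genuine bookkeeping is the verification above that the sublevel-set hypothesis of Cor.~\ref{cor:Stability_A_infty} is satisfied --- in particular that the empty sublevel sets arising for $t < 0$ cause no harm, and that the thickening hypothesis is exactly what is needed for $t \geq 0$.
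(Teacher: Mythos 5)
Your proposal is correct and follows essentially the same route as the paper: the paper likewise deduces the corollary by applying Cor.~\ref{cor:Stability_A_infty} to $f=d^X$, $g=d^Y$, noting that the thickening hypothesis is equivalent to all sublevel sets of $d^X$ and $d^Y$ lying in $\Top_n$, and using the identity $\|d^X-d^Y\|_\infty=d_H(X,Y)$. Your extra bookkeeping (the empty sublevel sets for $t<0$ and the $t=0$ case) only makes explicit what the paper leaves implicit.
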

    
The condition $X^{+\tau]}, Y^{+\tau]}\in\Top_n$ for all $\tau\geq 0$ is equivalent to assuming
all sublevel sets of $d^X$ and $d^Y$ to be in $\Top_n$.
Hence, Cor. \ref{cor:stability_Hausdorff} follows directly from applying Cor. \ref{cor:Stability_A_infty} to $f=d^X, g=d^Y$.

For computational purposes, one tends to work with simplicial or cubical complexes.
Let $X$ be a finite subset of a metric space $(M, d)$. For any $\epsilon > 0$, the 
\Cech ech complex $\cech_\epsilon (X)$ is a simplicial complex defined as the nerve of the set of open balls of radius $\epsilon$ centered at all points in $X$\footnote{Note that $\cech_\epsilon (X)$ is sometimes defined using the radius $\epsilon/2$.}.
The Nerve Lemma \cite[Corollary 3 in Section 9]{Borsuk48} shows that $\cech_\epsilon (X)$ is homotopy equivalent to the sublevel set $X^{+\epsilon} = {\left(d^X\right)}^{-1}(-\infty, \epsilon)$, provided that all intersections of balls are either empty or contractible.
Hence, Cor.~\ref{cor:stability_Hausdorff} can be interpreted as a result on the stability of the $A_\infty$ persistent homology of the \Cech ech-complex filtration.

We finally come to extrapolating topological properties of a closed subspace $X$ of a metric space $M$ from a finite set $P$ of points which may have been inaccurately sampled from $X$.
For smooth manifolds $X$ in low dimensions, some methods \cite{Amenta-Bern99,Dey-Goswami04, Niyogi-Smale-Weinberger08} can reconstruct $X$ from $P$ so that one can estimate the Betti numbers of $X$.
V. Robins \cite{Robins99}, V. de Silva and G. Carlsson \cite{Carlsson-de_Silva04} started estimating these Betti numbers via persistent homology instead. One of the advantages of this approach is that it can be used for smooth and non-smooth spaces $X$ of all dimensions, and it avoids having to choose an optimal thickening amount which may sometimes be impossible to find. Later on, stability would play a crucial role in this homology estimation task \cite{Cohen_Steiner-Edelsbrunner-Harer07, Blumberg-Gal-Mandell-Pancia14, FasyEtAl14}.
F. Chazal and A. Lieutier \cite{Chazal-Lieutier05} were the first to estimate topological information of $X$ not captured by the homology groups. Namely, they approximated the fundamental group $\pi_1(X)$. More on fundamental group of point clouds can be found in \cite{Brendel-Dlotko-EllisEtAl15}.
The current work goes further in this direction by showing that we can also estimate $A_\infty$ information of $X$, as we will see in Cor.~\ref{cor:ainfty-inference}.

We now define an analogous concept to that of homological critical value (Def.~\ref{def:homological_critical_value}) for $A_\infty$-structures.

\begin{definition}
Given a continuous map $f \colon X \longrightarrow \mathbb R$ whose sublevel sets are in $\Top_n$ and
a functor $\kappa_{n,p}$ defined as in Def.~\ref{Def:Functor_n_p}, we say that a real number $a\in\mathbb R$ is a \textbf{$\bm{\Delta_{n,p}}$ critical value} of $f$ if the map 
$$\kappa_{n,p}\mathcal S (f)(a-\epsilon) \longrightarrow \kappa_{n,p}\mathcal S (f)(a+\epsilon)$$
induced in homology by the inclusion
$$ f^{-1}(-\infty, a-\epsilon] \xhookrightarrow{} f^{-1}(-\infty, a+\epsilon] $$
is not an isomorphism for all sufficiently small $\epsilon > 0$.
We then define the \textbf{$\bm{\Delta_{n,p}}$ feature size} of $X$, denoted by $\Delta_{n,p}fs(X)$, as the infimum over all positive $\Delta_{n,p}$ critical values of the distance function $d^X$.
\end{definition}

Just as the homological feature size used in \cite{Cohen_Steiner-Edelsbrunner-Harer07}, the 
$\Delta_{n,p}$ feature size depends not only on the topology of $X$, but also on its geometry.

Given $n, p$ and a functor $\kappa_{n,p}$ defined as in Def.~\ref{Def:Functor_n_p}, the following result computes $A_\infty$-information of a $\delta$-thickening of $X$ via the $A_\infty$ persistent homology of $P$, where we can think of $P$ as a finite approximation to $X$.

\begin{corollary}\textbf{($\bm{A_\infty}$ inference)}
\label{cor:ainfty-inference}
Let $X, P$ be closed subspaces of a metric space $M$ 
such that the thickenings $X^{+\tau]}$ and $P^{+\tau]}$ are in $\Top_n$ for all $\tau\geq 0$. 
Let $\epsilon>0$ be a real number with $d_H(X,P) <\epsilon< \frac{\Delta_{n,p}fs(X)}{4}$. For all sufficiently small $\delta>0$, the topological invariant $\dim \kappa_{n,p}(X^{+\delta})$
coincides with the number of intervals in the $\Delta_{n,p}$-barcode $B_{n,p}(P)$ which contain the interval $[\epsilon, 3\epsilon].$
\end{corollary}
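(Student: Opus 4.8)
The plan is to adapt the classical Homology Inference argument to the functor $\kappa_{n,p}$, whose existence and good behaviour are granted by Thm.~\ref{thm:Functoriality}. First I would record two reformulations. Since $\SS(d^P)(t) = P^{+t]}$, Cor.~\ref{cor:Barcode_Ainfty} makes $\kappa_{n,p}\SS(d^P)$ a p.f.d.\ persistence module with barcode $B_{n,p}(P)$, and the number of intervals $[a,b)\in B_{n,p}(P)$ with $a\le\epsilon$ and $b>3\epsilon$ — exactly those containing $[\epsilon,3\epsilon]$ — equals the rank of the structure map
\begin{equation*}
\kappa_{n,p}(P^{+\epsilon]}) \longrightarrow \kappa_{n,p}(P^{+3\epsilon]}).
\end{equation*}
On the other hand, the hypothesis $4\epsilon < \Delta_{n,p}fs(X)$ says that $d^X$ has no positive $\Delta_{n,p}$ critical value in $(0,4\epsilon]$; since the bars of $B_{n,p}(X)$ can only begin or end at such critical values, the module $\kappa_{n,p}\SS(d^X)$ is constant on $(0,4\epsilon]$, so every inclusion-induced map $\kappa_{n,p}(X^{+a]})\to\kappa_{n,p}(X^{+b]})$ with $0<a\le b\le 4\epsilon$ is an isomorphism. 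Squeezing the open thickening between closed ones, $X^{+\delta']}\subseteq X^{+\delta}\subseteq X^{+\delta]}$ for $\delta'<\delta$, shows moreover that for all small $\delta>0$ one has $\dim\kappa_{n,p}(X^{+\delta})=\dim\kappa_{n,p}(X^{+2\epsilon]})$.

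Next I would use the Hausdorff hypothesis. From $d_H(X,P)<\epsilon$, i.e.\ $\|d^X-d^P\|_\infty<\epsilon$, pick $\delta_0<\epsilon$ with $X\subseteq P^{+\delta_0]}$ and $P\subseteq X^{+\delta_0]}$. Using $(A^{+s]})^{+t]}\subseteq A^{+(s+t)]}$ together with $\delta_0<\epsilon$, these inclusions chain up, for all sufficiently small $\delta>0$, into
\begin{equation*}
X^{+\delta]} \hookrightarrow P^{+\epsilon]} \hookrightarrow X^{+2\epsilon]} \hookrightarrow P^{+3\epsilon]} \hookrightarrow X^{+4\epsilon]}.
\end{equation*}
All five spaces lie in $\Top_n$ by hypothesis, so applying $\kappa_{n,p}$ gives
\begin{equation*}
\kappa_{n,p}(X^{+\delta]}) \xrightarrow{\ \alpha\ } \kappa_{n,p}(P^{+\epsilon]}) \xrightarrow{\ \rho\ } \kappa_{n,p}(X^{+2\epsilon]}) \xrightarrow{\ \beta\ } \kappa_{n,p}(P^{+3\epsilon]}) \xrightarrow{\ \sigma\ } \kappa_{n,p}(X^{+4\epsilon]}).
\end{equation*}
By functoriality, $\beta\circ\rho$ is precisely the structure map $\kappa_{n,p}(P^{+\epsilon]})\to\kappa_{n,p}(P^{+3\epsilon]})$ of the first paragraph, whereas $\rho\circ\alpha$ and $\sigma\circ\beta$ are the inclusion-induced maps between $X$-thickenings at levels in $(0,4\epsilon]$, hence isomorphisms.

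Finally I would run the standard sandwich/rank argument. Because $\rho\circ\alpha$ is an isomorphism, $\rho$ is surjective; because $\sigma\circ\beta$ is an isomorphism, $\beta$ is injective. Therefore $\mathrm{im}(\beta\circ\rho)=\beta(\mathrm{im}\,\rho)=\beta\bigl(\kappa_{n,p}(X^{+2\epsilon]})\bigr)$, and injectivity of $\beta$ yields
\begin{equation*}
\mathrm{rank}(\beta\circ\rho)=\dim\kappa_{n,p}(X^{+2\epsilon]})=\dim\kappa_{n,p}(X^{+\delta}).
\end{equation*}
Combined with the first paragraph, this identifies $\dim\kappa_{n,p}(X^{+\delta})$ with the number of bars of $B_{n,p}(P)$ containing $[\epsilon,3\epsilon]$, which is the claim. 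I expect the main obstacle to be the careful translation of the feature-size hypothesis into the constancy of $\kappa_{n,p}\SS(d^X)$ on $(0,4\epsilon]$, together with the open-versus-closed bookkeeping relating the statement's $X^{+\delta}$ to the closed thickenings produced by $\SS$; both are controlled by the absence of critical values in $(0,4\epsilon]$. The interleaving inclusions and the linear-algebra step are then routine once the strict inequalities $\delta_0<\epsilon$ and $4\epsilon<\Delta_{n,p}fs(X)$ are in place.
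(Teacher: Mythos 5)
Your proposal is correct and takes essentially the same route as the paper: both adapt the Cohen--Steiner--Edelsbrunner--Harer homology-inference argument to the functor $\kappa_{n,p}$, deriving everything from the interleaving chain $X^{+\delta]}\subseteq P^{+\epsilon]}\subseteq X^{+2\epsilon]}\subseteq P^{+3\epsilon]}\subseteq X^{+4\epsilon]}$ plus the fact that the absence of $\Delta_{n,p}$ critical values of $d^X$ below $4\epsilon$ makes the $X$-to-$X$ composites isomorphisms; the paper phrases this with image dimensions $F_i^j$, $G_i^j$ and two inequalities, while your surjectivity/injectivity rank computation is the same argument in different bookkeeping. The one step that does not deliver quite what it claims is the squeeze $X^{+\delta']}\subseteq X^{+\delta}\subseteq X^{+\delta]}$: an isomorphic composite only makes the first map injective and the second surjective, so it yields $\dim\kappa_{n,p}(X^{+\delta})\geq\dim\kappa_{n,p}(X^{+\delta]})$ rather than equality (and it tacitly needs the open thickening to lie in $\Top_n$); however, the paper's own proof makes the identical silent identification $\dim X_\delta^\delta=\dim\kappa_{n,p}(X^{+\delta})$, so your argument matches the paper's level of rigor on this point.
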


The lower bound on $\epsilon$ appearing in Cor. \ref{cor:ainfty-inference} describes how accurately $P$ approximates $X$. The better $P$ approximates $X$, the smaller $d_H(X,P)$ is.
The upper bound on $\epsilon$ appearing in Cor. \ref{cor:ainfty-inference}, $\frac{\Delta_{n,p}fs(X)}{4}$, depends on the metrics of $X$ and $M$.

The proof of Cor.~\ref{cor:ainfty-inference} is basically that of \cite[Homology Inference Theorem]{Cohen_Steiner-Edelsbrunner-Harer07}, changing the persistent homology functor by the $A_\infty$ persistent homology functor $\kappa_{n,p}$. We include the proof anyway for the sake of completeness.

\begin{proof}
Let $f, g \colon M \longrightarrow \mathbb R$ be continuous maps whose sublevel sets are all in $\Top_n$. 
Let us define 
$$ F_i \coloneqq \kappa_{n,p} \left( f^{-1} (-\infty, i] \right) = \kappa_{n,p}  \mathcal S \left( f \right) (i) 
\hspace{5mm}\text{and}\hspace{5mm}
G_i \coloneqq \kappa_{n,p} \left( g^{-1} (-\infty, i] \right) = \kappa_{n,p}  \mathcal S \left( g \right) (i)$$
for all $i\in\mathbb R$.
Since $\kappa_{n, p}$ is functorial (Thm. \ref{thm:Functoriality}), we can consider the map
$$\kappa_{n,p}\left(f^{-1} (-\infty, i] \xhookrightarrow{} f^{-1} (-\infty, j]\right)$$ for any real numbers $i<j$, which we will denote by $f_i^j \colon F_i \longrightarrow F_j$.
Define $g_i^j \colon G_i \longrightarrow G_j$ analogously.
Finally, let us set 
$$F_i^j \coloneqq \Image f_i^j
\hspace{5mm}\text{and}\hspace{5mm}
G_i^j \coloneqq \Image g_i^j.$$ This way, $\dim F_i^j$ and $\dim G_i^j$ are precisely the number of intervals which contain $[i, j)$ in the $A_\infty$-barcodes $B_{n,p}(f)$ and $B_{n,p}(g)$, respectively.
If $f=d^X$ or $f=d^P$, we will denote $F_i^j$ by $X_i^j$ or $P_i^j$, respectively.

If $||f-g||_\infty < \epsilon$, then $f^{-1} (-\infty, i] \subseteq g^{-1} (-\infty, i+\epsilon]$ for all $i\in\mathbb R$.
By the functoriality of $\kappa_{n, p}$ (Thm.~\ref{thm:Functoriality}), this inclusion induces a map $\varphi_i \colon F_i \longrightarrow G_{i+\epsilon}$.
Exchanging $f$ and $g$, we also have a map $\psi_i \colon G_i \longrightarrow F_{i+\epsilon}$.
We can fit these maps into the following diagram:
$$\xymatrix{
F_{i-\epsilon} \ar[r]^-{f_{i-\epsilon}^{j+\epsilon}} \ar[d]_-{\varphi_{i-\epsilon}} & F_{j+\epsilon} \\
G_i \ar[r]_-{g_i^j} & G_j \ar[u]_-{\psi_j}
}$$
Since all maps in the diagram are induced by inclusions, the diagram commutes, for any reals $i<j$.
Some diagram chasing shows that 
$$ F_{i-\epsilon}^{j+\epsilon} \subseteq \psi_j\left(G_i^j\right),$$
and hence 
\begin{equation}
\label{eq:homologyInference_dimensions}
\dim F_{i-\epsilon}^{j+\epsilon} \leq \dim G_i^j.
\end{equation}

Now set
$f=d^X, g=d^P, i=\epsilon + \delta$ and $j=3\epsilon+ \delta$.
Since $||d^X-d^P||_\infty=d_H(X,P)$ and we assumed the $d_H(X,P)<\epsilon$ and that all sublevel sets of $d^X$ and $d^P$ are in $\Top_n$, 
we can use (\ref{eq:homologyInference_dimensions}) and rewrite it as
\begin{equation}
\label{eq:homologyInference_dimensions1}
\dim X_{\delta}^{4\epsilon + \delta} \leq \dim P_{\epsilon + \delta}^{3\epsilon+\delta}.
\end{equation}
Analogously, letting $f=d^P, g=d^X$ and $i=j=2\epsilon+\delta$, (\ref{eq:homologyInference_dimensions}) yields
\begin{equation}
\label{eq:homologyInference_dimensions2}
\dim P_{\epsilon+\delta}^{3\epsilon + \delta} \leq \dim X_{2\epsilon + \delta}^{2\epsilon+\delta}.
\end{equation}

Choosing $\delta$ small enough so that $4\epsilon + \delta < \Delta_{n,p}fs(X)$, there are no $\Delta_{n,p}$ critical values of $d^X$ in $[\delta, 4\epsilon + \delta]$.
In particular, $\dim X_{\delta}^{4\epsilon + \delta} = \dim X_{2\epsilon + \delta}^{2\epsilon+\delta},$ and the inequalities in (\ref{eq:homologyInference_dimensions1}) and (\ref{eq:homologyInference_dimensions2}) becomes equalities.
Again, since there are no $\Delta_{n,p}$ critical values of $d^X$ in $[\delta, 4\epsilon + \delta]$,
$\dim X_\delta^{4\epsilon+\delta} = \dim X_\delta^\delta = \dim \kappa_{n,p}(X^{+\delta})$.

Since $\epsilon>0$, $\dim P_{\epsilon + \delta}^{3\epsilon+\delta} = \dim P_{\epsilon}^{3\epsilon}$ for every $\delta>0$ small enough.
Therefore, for sufficiently small $\delta>0$, 
$$\dim \kappa_{n,p}(X^{+\delta}) = \dim P_{\epsilon + \delta}^{3\epsilon+\delta} = \dim P_{\epsilon}^{3\epsilon},$$
concluding the proof.
\end{proof}

Actually, the proof of Cor. \ref{cor:ainfty-inference} clearly shows that we do not need to assume that all sublevel sets of $d^X$ and $d^P$ are in $\Top_n$. Rather, it is enough to assume that 
$X^{+\tau]}\in\Top_n$ for all $\tau\leq 4\epsilon + \delta$ and that 
$P^{+\tau]}\in\Top_n$ for all $\tau\leq 3\epsilon + \delta$.

In most real world situations and for small enough values of $\delta>0$, $X$ is a retract of $X^{+\delta}$ and therefore the homotopy types of $X$ and $X^{+\delta}$ coincide. Thus, we can see Cor.~\ref{cor:ainfty-inference} as estimating topological properties of $X$ from a (possibly finite) closed subset $P\subseteq M$ approximating $X$.

Note that when we focus on the second operation $\Delta_2$ on an $A_\infty$-coalgebra $\left(H_*(X), \{\Delta_n\}_n \right)$
 (such as in Ex. \ref{ex:barcodes_in_PH_and_Ainfty}), or equivalently, on the cup product in cohomology (which is the second operation $\mu_2=\smile$ on an $A_\infty$-algebra $\left(H^*(X), \{\mu_n\}_n \right)$), then all results in this paper hold without the need to restrict to a category $\Top_n \subseteq \Top$ for $n>2$ and instead, we can work directly with the category of topological spaces $\Top$. In particular, this paper proves the stability of the persistence of cup product with minimal restrictions.

\section*{Acknowledgements}
F. Belch\'i was partially supported by the EPSRC grant EPSRC EP/N014189/1 \emph{(Joining the dots)} to the University of Southampton and by the Spanish State Research Agency through the Mar\'ia de Maeztu Seal of Excellence to IRI (MDM-2016-0656).
A. Stefanou was partially supported both by the National Science Foundation through grant NSF-CCF-1740761 TRIPODS TGDA@OSU and by the Mathematical Biosciences Institute at the Ohio State University.

\bibliographystyle{plain}  
{
\footnotesize
\bibliography{mybib_Kiko}{}
}
\end{document}